\DeclareMathAlphabet{\cat}{OT1}{cmss}{m}{sl}
\newtheorem*{theorem*}{Theorem}
\newtheorem{theorem}{Theorem}[section]
\newtheorem{proposition}[theorem]{Proposition}
\newtheorem{lemma}[theorem]{Lemma}
\newtheorem{corollary}[theorem]{Corollary}
\theoremstyle{definition}
\newcommand{\tens}{\otimes}
\newcommand{\gmu}{\boldsymbol{\mu}}
\newcommand{\sep}{\mathrm{sep}}
\newcommand{\Ker}{\operatorname{Ker}}
\newcommand{\ind}{\operatorname{\hspace{0.3mm}ind}}
\newcommand{\Inv}{\operatorname{Inv}}
\newcommand{\dec}{\operatorname{dec}}
\newcommand{\disc}{\operatorname{disc}}
\newcommand{\Br}{\operatorname{Br}}
\newcommand{\Gal}{\operatorname{Gal}}
\newcommand{\gPGL}{\operatorname{\mathbf{PGL}}}
\newcommand{\gPGO}{\operatorname{\mathbf{PGO}}}
\newcommand{\gSL}{\operatorname{\mathbf{SL}}}
\newcommand{\gO}{\operatorname{\mathbf{O}}}
\newcommand{\gGL}{\operatorname{\mathbf{GL}}}
\newcommand{\gm}{\operatorname{\mathbb{G}}_m}
\newcommand{\gSpin}{\operatorname{\mathbf{Spin}}}
\newcommand{\gHSpin}{\operatorname{\mathbf{HSpin}}}
\newcommand{\gE}{\operatorname{\mathbf{E}}}
\newcommand{\nr}{\operatorname{nr}}
\newcommand{\Dec}{\operatorname{Dec}}
\newcommand{\norm}{\operatorname{norm}}
\newcommand{\red}{\operatorname{red}}
\newcommand{\Z}{\mathbb{Z}}
\newcommand{\QZ}{\mathop{\mathbb{Q}/\mathbb{Z}}}
\title[Degree 3 unramified cohomology of classifying spaces] 
{Degree 3 unramified cohomology of classifying spaces for exceptional groups}
\author
[S.~Baek] {Sanghoon Baek}
\address[Sanghoon Baek]{Department of Mathematical Sciences, 
	KAIST,
	291 Daehak-ro, Yuseong-gu,
	Daejeon 305-701,
	Republic of Korea}
\email{sanghoonbaek@kaist.ac.kr}
\urladdr{http://mathsci.kaist.ac.kr/~sbaek/}
\begin{document}
	
\begin{abstract}
Let $G$ be a reductive group defined over an algebraically closed field of characteristic $0$ such that the Dynkin diagram of $G$ is the disjoint union of diagrams of types $G_{2}, F_{4}, E_{6}, E_{7}, E_{8}$. We show that the degree $3$ unramified cohomology of the classifying space of $G$ is trivial. In particular, combined with articles by Merkurjev \cite{Mer17} and the author \cite{Baek}, this completes the computations of degree $3$ unramified cohomology and reductive invariants for all split semisimple groups of a homogeneous Dynkin type.
\end{abstract}

\maketitle

\section{Introduction}

A representation $V$ of an algebraic group $G$ over a field $F$ is called $\emph{generically free}$ if there is a $G$-torsor $U\to U/G$ for a $G$-equivariant open subset $U$ of the affine variety $V$. As we can embed $G$ into the general linear group $\gGL_{n}$ for some $n$, generically free representations of $G$ always exist. The variety $U/G$ can be viewed as an algebraic approximation of the classifying space of $G$ in the sense of Totaro, which will be denoted by $BG$. By the no-name lemma, the stable rationality of $BG$ does not depend on the choice of generically free representations.

A generalized Noether's problem asks whether $BG$ is stably rational. For finite groups $G$, Swan \cite{Swan} and Saltman \cite{Sal} provided counterexamples (over $\mathbb{Q}$ and $\mathbb{C}$, respectively) to the original Noether's question. However, the generalized Noether's problem is still open for a connected algebraic group $G$ over an algebraically closed field.

A basic way to detect the stable-rationality of $BG$ is to make use of unramified cohomology of the function field $E:=F(BG)$ as follows. For a field extension $K/F$, consider the Galois cohomology group $H^{d}(K):=H^{d}\big(\Gal(K_{\sep}/K), \QZ(d-1)\big)$, where $\QZ(d-1)$ denotes the direct sum of the limit of the Galois modules $\gmu_{n}^{\tens (d-1)}$ and a $p$-part in the case $\operatorname{char}(F)=p>0$ defined via logarithmic de Rham-Witt differentials, and its $p$-primary component $H^{d}(K)\{p\}$. For every $p\neq \operatorname{char}(F)$, the subgroup $H^{d}_{\nr}(E)\{p\}$ of all unramified elements in $H^{d}(E)\{p\}$ is defined by
\[H^{d}_{\nr}(E)\{p\}=\bigcap_{v}\Ker\Big(\,\partial_{v}:H^{d}(E)\{p\}\to H^{d-1}\big(F(v)\big)\{p\}\, \Big)  \]
for all discrete valuations $v$ on $E/F$, where $\partial_{v}$ denotes the residue homomorphism. It is known that if $BG$ is a stably rational integral variety over $F$, then the group $H^{d}_{\nr}(E)
$ becomes trivial, i.e., $H^{d}_{\nr}(E)=H^{d}(F)$. Hence, any nontrivial unramified cohomology $H^{d}_{\nr}(E)\{p\}$ for some degree $d$ and some prime integer $p$ shows the non-stable rationality of $BG$. Hence, it would be necessary to determine the triviality of the group $H^{d}_{\nr}(E)$.

Now we assume that $F$ is an algebraically closed field of characteristic $0$. In \cite{Bog}, Bogomolov proved that $H^{1}_{\nr}(E)=H^{2}_{\nr}(E)=0$ for any connected group $G$. For $d=3$, the triviality of the group $H^{3}_{\nr}(E)$ has been verified in several cases. For $G=\gPGL_{n}$ (projective general linear group), the triviality of the group was proved by Saltman in \cite{Sal2}. For a simple simply connected group $G$, the same result was proved by Merkurjev (classical groups) \cite{Mer2002} and Garibaldi (exceptional groups) \cite{Skip2}. Recently, the triviality of the group $H^{3}_{\nr}(E)$ was proved for all simple groups $G$ \cite{Mer162} and all semisimple groups of types $A$, $B$, $C$, $D$ \cite{Mer17}, \cite{Baek}.

In the present paper, we determine the triviality of the degree $3$ unramified cohomology for any reductive group whose semisimple part is a group of exceptional type (see Theorem \ref{realmainthm}). 
\begin{theorem}\label{mainintro}
	Let $G$ be a reductive group over an algebraically closed field of characteristic $0$. Assume that the Dynkin diagram of $G$ is the disjoint union of diagrams of types $G_{2}, F_{4}, E_{6}, E_{7}, E_{8}$. Then, $H^{3}_{\nr}(F(BG))=0$.
\end{theorem}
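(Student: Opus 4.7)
The plan is to reduce the reductive case to the simple simply connected exceptional case, where the vanishing is due to Garibaldi \cite{Skip2}, by combining this input with the descent machinery developed in \cite{Mer17} and \cite{Baek} for the classical types. The first step is to write, using the standard structure of reductive groups over an algebraically closed field of characteristic $0$, $G \simeq (C \times \tilde G)/\mu$, where $C$ is the central torus, $\tilde G$ is the simply connected cover of the semisimple part $G^{ss}$, and $\mu \subset C \times Z(\tilde G)$ is a finite central subgroup. The hypothesis on the Dynkin diagram forces $\tilde G = \prod_i \tilde G_i$ with each $\tilde G_i$ simple simply connected of type $G_2$, $F_4$, $E_6$, $E_7$, or $E_8$. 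Since $G_2$, $F_4$, $E_8$ have trivial center, any nontrivial coupling in $\mu$ must involve the $E_6$ factors (center $\mathbb{Z}/3$), the $E_7$ factors (center $\mathbb{Z}/2$), and $C$.

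Next, I would establish the vanishing for $\tilde G$. Garibaldi's theorem gives $H^{3}_{\nr}(F(B\tilde G_i))=0$ for each factor, and one extends this to the product by working with a generically free representation $V = \bigoplus_i V_i$, where each $V_i$ is generically free for $\tilde G_i$. The generic torsor of $\tilde G$ then splits as a product of generic torsors of the $\tilde G_i$, so the residues of a degree-$3$ unramified class along divisors of a smooth compactification of $V/\tilde G$ can be analyzed factor by factor and vanish by the single-factor statement. For the central torus $C$, the no-name lemma shows that $BC$ is stably rational, so the product $C \times \tilde G$ still has vanishing $H^{3}_{\nr}$.

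Finally, descend from $C \times \tilde G$ to $G = (C \times \tilde G)/\mu$. The plan here is to apply an exact sequence of the kind developed in \cite{Mer17}, \cite{Baek}, relating $H^{3}_{\nr}(F(BG))$ to $H^{3}_{\nr}(F(B(C \times \tilde G)))$ together with boundary terms built from the character group of $\mu$ and lower-degree invariants of $C \times \tilde G$. The main obstacle I anticipate is the vanishing of these boundary contributions when $\mu$ couples the torus $C$ with the centers of the $E_6$ and $E_7$ factors in a nondiagonal fashion: the boundary cocycles then encode combinations of Tits classes of the exceptional factors, and killing them requires invoking specific vanishing properties — such as triviality of reductive invariants in lower degrees, already available from \cite{Baek}, \cite{Mer17} — beyond the purely formal product/quotient machinery. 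This is the point where exceptional-type data must be genuinely used.
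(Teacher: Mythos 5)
Your reduction to the semisimple part and to products of $E_6$'s and $E_7$'s (the $G_2$, $F_4$, $E_8$ factors having trivial center) matches the paper, and the product formula for invariants does take care of the simply connected case. The genuine gap is your final ``descent'' step from $C\times\tilde G$ to $G=(C\times\tilde G)/\gmu$. Passing through a central isogeny is exactly where all the difficulty of this problem lives --- the paper's own introduction stresses that $B\gSL_n$ is stably rational while $B\gPGL_n$ is expected not to be --- and there is no exact sequence in \cite{Mer17} or \cite{Baek} relating $H^3_{\nr}(F(BG))$ to $H^3_{\nr}(F(B\tilde G))$ with computable boundary terms. What those papers actually do for classical types is describe the torsors of a reductive envelope $G_{\red}$ explicitly (as tuples of algebras with involution subject to Brauer relations), realize all degree~$3$ invariants as Arason invariants of trace forms, and exhibit a ramified value for each nontrivial one. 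The paper explicitly notes that this method is unavailable for $E_7$ because the torsors cannot be described. Moreover, your fallback of invoking ``triviality of reductive invariants'' is blocked: Propositions \ref{prop:typeE6} and \ref{prop:typeE7} show that $\Inv^3_{\red}(G)\{2\}$ is \emph{nonzero} for these quotients, so the inclusion $\Inv^3_{\nr}(G)\{2\}\subset\Inv^3_{\red}(G)\{2\}$ gives nothing.

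The two ideas your proposal is missing are the following. For $E_6$ the quotient is by a subgroup of $\gmu_3^n$, whose order is prime to~$2$; since only the $2$-primary part is at issue (by Merkurjev's theorem the odd-primary unramified invariants of any split reductive group vanish), a restriction--corestriction argument (Proposition \ref{propunramified}) identifies $\Inv^3_{\nr}(G)\{2\}$ with that of the simply connected group, which vanishes. For $E_7$ the paper restricts to the maximal-rank subgroup $H=(P)^n/\gmu$ with $P=(\gSpin_{12}\times\gSL_2)/\boldsymbol{\lambda}$, proves $\Inv^3(G)\{2\}\hookrightarrow\Inv^3(H)\{2\}$ using Petrov's $2$-surjectivity result and Garibaldi's injectivity lemma, and then carries out the explicit torsor/trace-form analysis for $H_{\red}=(\mathbf{\Omega}_{12}\times\gGL_2)^n/\gmu$ to show every nontrivial invariant of $H$ is ramified (Corollary \ref{keycorollary}). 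Without some substitute for this maximal-rank-subgroup step, your outline does not close for type $E_7$.
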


We remark that if $G$ is a simple simply connected group of type $G_{2}$, $F_{4}$, $E_{6}$, or $E_{7}$ defined over the complex numbers $\mathbb{C}$, then the triviality of the group $H^{3}_{\nr}(\mathbb{C}(BG))$ immediately follows from the stronger statement that $BG$ is stably rational \cite{Bog}. However, even for a simple adjoint group $\bar{G}$ of type $E_{6}$ or $E_{7}$ it is not known whether $B\bar{G}$ is stably rational or not.

In general, the stable rationality of $BG$ for a semisimple group $G$ may be independent from isogenous groups of $G$. For instance, the classifying space $B\!\gSL_{n}$ of the special linear group $\gSL_{n}$ is stably rational for all $n$, but the classifying space $B\!\gPGL_{n}$ of its adjoint group is expected not to be stably rational for some $n$. Similarly, the space $B\!\gO^{+}_{n}$ of the special orthogonal group is stably rational for all $n$, but $B\!\gSpin_{n}$ of its simply connected group is expected not to be stably rational for some $n\geq 15$ (see \cite{Mer17prime}).

For the proof of our main theorem, we use the notion of \emph{cohomological invariants} of an algebraic group \cite{GMS}. A degree $d$ cohomological invariant of a split reductive group $G$ over a field $F$ is a morphism of functors $H^{1}(-,G)\to H^{d}(-)$ on the category of field extensions over $F$, where $H^{1}(K,G)$ denotes the set of isomorphism classes of $G$-torsors over $K$. An element in the group $\Inv^{3}(G)$ of degree $3$ invariants is \emph{normalized} if it vanishes on trivial $G$-torsors, so that such invariants forms a subgroup $\Inv^{3}_{\norm}(G)$. Hence,  $\Inv^{3}(G)=\Inv^{3}_{\norm}(G)\oplus H^{3}(F)$. We write $\Inv^{3}(G)\{p\}$ for the $p$-primary component of $\Inv^{3}(G)$. All degree $3$ normalized invariants given by a cup product of a degree $2$ invariant with a constant invariant of degree $1$ form a subgroup $\Inv^{3}_{\dec}(G)$ of $\Inv^{3}_{\norm}(G)$. The factor group $\Inv^{3}_{\ind}(G):=\Inv^{3}_{\norm}(G)/\Inv^{3}_{\dec}(G)$ is called the group of \emph{indecomposable} invariants. In particular, if $F$ is an algebraically closed field, then $\Inv^{3}_{\norm}(G)=\Inv^{3}_{\ind}(G)$ \cite{Mer164}.

A degree $3$ invariant in $\Inv^{3}(G)\{p\}$ is called \emph{unramified} if for every field extension $K/F$ all values of the invariant are contained in the group $H^{3}_{\nr}(K)\{p\}$ of all unramified elements. By \cite[Proposition 4.1]{Mer162}, the group $\Inv^{3}_{\nr}(G)\{p\}$ of all unramified invariants can be identified with the unramified cohomology group of $E=F(BG)$, i.e.,
\begin{equation}\label{rostident}
\Inv^{3}_{\nr}(G)\{p\}\simeq H^{3}_{\nr}(E)\{p\}.
\end{equation}
If $G=G_{1}\times G_{2}$ for some split semisimple groups $G_{1}$ and $G_{2}$, then by \cite[Corollary 6.3]{Mer162} we have
\begin{equation}\label{twosplitsemisimple}
\Inv^{3}(G)\simeq \Inv^{3}(G_{1})\oplus \Inv^{3}(G_{2}).
\end{equation}
As the simple simply connected groups of types of $G_{2}$, $F_{4}$, and $E_{8}$ have the trivial center and they have the trivial unramified degree $3$ cohomology groups for the corresponding classifying spaces, by  (\ref{rostident}) and (\ref{twosplitsemisimple}) the proof of our main theorem is reduced to the following (see Lemma \ref{mainpropE6} and Proposition \ref{mainpropE7}):
\begin{proposition}\label{introprop}
	Let $G$ be a split semisimple group of type $E_{6}$ or $E_{7}$ defined over an algebraically closed field $F$, i.e., $G= (\gE_{6}\times \cdots \times \gE_{6})/\gmu$ with $n\, (\geq 1)$ copies of a split simple simply connected group $\gE_{6}$ of type $E_{6}$ and a central subgroup $\gmu\subseteq \gmu_{3}^{n}$ or $G=(\gE_{7}\times \cdots \times \gE_{7})/\gmu$ with $n\, (\geq 1)$ copies of a split simple simply connected group $\gE_{7}$ of type $E_{7}$ and a central subgroup $\gmu\subseteq \gmu_{2}^{n}$. Then, for every $p\neq \operatorname{char}(F)$ we have $\Inv^{3}_{\nr}(G)\{p\}=0$.
\end{proposition}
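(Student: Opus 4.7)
The plan is to combine the identification $\Inv^{3}_{\nr}(G)\{p\} \simeq H^{3}_{\nr}(F(BG))\{p\}$ from (\ref{rostident}) with the equality $\Inv^{3}_{\norm}(G) = \Inv^{3}_{\ind}(G)$ available over the algebraically closed base field $F$, so that the assertion becomes: every indecomposable degree-$3$ cohomological invariant of $G$ is either trivial or ramified.

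The principal tool is the simply connected isogeny cover $\pi \colon \tilde{G} = \mathbf{E}^{n} \twoheadrightarrow G$ with $\mathbf{E} = \gE_{6}$ or $\gE_{7}$ and $\ker \pi = \gmu$, together with the induced pullback $\pi^{*}\colon \Inv^{3}(G) \to \Inv^{3}(\tilde G)$. By the decomposition (\ref{twosplitsemisimple}) and Garibaldi's theorem \cite{Skip2}, $\Inv^{3}(\tilde G)\{p\}$ is the direct sum of the $p$-primary Rost invariants $r_{i}$ of the $n$ copies of $\mathbf{E}$, each ramified unless zero. For primes $p$ coprime to $|\gmu|$ --- i.e.\ $p\neq 3$ in type $E_{6}$ and $p\neq 2$ in type $E_{7}$ --- the map $\pi^{*}$ is an isomorphism on $p$-primary parts via the Kummer sequence for $\gmu$, and the proposition follows directly.

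The substantive case is $p=3$ for $E_{6}$ and $p=2$ for $E_{7}$. Here I would invoke Merkurjev's structure theorem for indecomposable degree-$3$ invariants of a split semisimple group to present $\Inv^{3}_{\ind}(G)\{p\}$ as the subgroup of $\bigoplus_{i=1}^{n}\mathbb{Z}/p^{e}\cdot r_{i}$ (with $e$ the $p$-adic valuation of the Rost number of $\mathbf{E}$) consisting of those combinations $\sum a_{i}r_{i}$ that descend to well-defined invariants of $G$ --- equivalently, those constant on the fibers of the connecting map $\partial\colon H^{1}(-,G) \to H^{2}(-,\gmu)$. The $\gmu$-descent condition cuts out an explicit sublattice determined by the embedding $\gmu \hookrightarrow \gmu_{p}^{n}$, and the vanishing of $H^{1}(F,\gmu_{p})$ over the algebraically closed field $F$ ensures that no further cup-product corrections in the kernel of $\pi^{*}$ contribute.

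The main obstacle is then to show that no nonzero element of this sublattice is unramified. Following the strategy of Merkurjev \cite{Mer17} and the author \cite{Baek} in the classical cases, for each putative nonzero combination $\alpha=\sum a_{i}r_{i}$ I would construct a versal $G$-torsor over a function field $K/F$ equipped with a discrete valuation $v$ such that $\partial_{v}(\alpha)\neq 0$. Concretely, a generic $\tilde G$-torsor is specified by $n$ triples of independent symbols $\{x_{i},y_{i},z_{i}\}$ realizing the $r_{i}$, and one arranges algebraic relations among them to descend the torsor through $\pi$ while keeping some $z_{i_{0}}$ as a uniformizer; the residue of $a_{i_{0}}r_{i_{0}}$ at $v$ is then of symbol type $a_{i_{0}}\{\bar x_{i_{0}},\bar y_{i_{0}}\}$ and can be made nonzero by a suitable choice. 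The technical heart is to verify that the $\gmu$-descent constraints never force all such residues to cancel, which is where the hypothesis $\gmu \subseteq \gmu_{3}^{n}$ (resp.\ $\gmu_{2}^{n}$) rather than an arbitrary central subgroup is crucially used.
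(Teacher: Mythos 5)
Your reduction steps are fine and agree with the paper: the identification $\Inv^{3}_{\nr}(G)\{p\}\simeq H^{3}_{\nr}(F(BG))\{p\}$, the equality of normalized and indecomposable invariants over the algebraically closed base, and the disposal of primes $p$ coprime to $|\gmu|$ via Proposition \ref{propunramified} together with Garibaldi's theorem for the simply connected factors. (You do miss a free shortcut: by Merkurjev's theorem (\ref{oddprimeunramified}) \emph{every} odd-primary unramified invariant of a split reductive group over an algebraically closed field vanishes, so $p=3$ for type $E_{6}$ is not a ``substantive case'' at all --- only $p=2$ for type $E_{7}$ genuinely remains.)

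The genuine gap is in your final paragraph, which is the entire technical content of the remaining case. You propose to show that each nonzero element of the sublattice of $\bigoplus_i \Z/p^{e}\, r_i$ is ramified by constructing a versal $G$-torsor ``specified by $n$ triples of independent symbols $\{x_i,y_i,z_i\}$ realizing the $r_i$'' and computing a residue of symbol type. No such description exists: torsors for $\gE_{7}$ (or for the reductive envelope of $(\gE_{7})^{n}/\gmu$) are not parametrized by symbols, and the Rost invariant of a versal $E_{7}$-torsor is not a symbol whose residue you can read off. The paper states explicitly that this is where the exceptional case diverges from the classical types $A$, $B$, $C$, $D$ treated in \cite{Mer17} and \cite{Baek}, where torsors of the relevant reductive groups \emph{can} be described by algebras with involution. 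Its workaround is the maximal-rank subgroup $P=(\gSpin_{12}\times\gSL_{2})/\boldsymbol{\lambda}$ of type $D_{6}\times A_{1}$: Petrov's $2$-surjectivity result yields an injection $\Inv^{3}(G)\{2\}\hookrightarrow\Inv^{3}(H)\{2\}$ for $H=(P)^{n}/\gmu$ (Lemma \ref{Esevenlemma}), and for $H$ one \emph{does} have an explicit torsor description (Lemma \ref{torsorH}), a complete list of invariants as Arason invariants of trace forms (Proposition \ref{invariantsHred}), and a residue computation showing each nontrivial one is ramified (Corollary \ref{keycorollary}). Note also that the reductive indecomposable invariants of $(\gE_{7})^{n}/\gmu$ are genuinely nontrivial (Proposition \ref{prop:typeE7}), so there is no hope of concluding from the lattice computation alone; some device replacing your nonexistent symbol presentation is indispensable, and your proposal supplies none.
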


In order to prove Proposition \ref{introprop}, we shall use the notion of \emph{reductive invariants} \cite{Mer162}. Let $G$ be a split semisimple group and let $G_{\red}$ be a split reductive group such that the commutator subgroup of $G_{\red}$ is $G$ and the center of $G_{\red}$ is a torus. Then, from the exact sequence
\begin{equation*}
1\to G\to G_{\red}\to T\to 1,
\end{equation*} 
where $T$ is a split torus, we see that $BG$ is stably birational to $BG_{\red}$, i.e., for every $p\neq \operatorname{char}(F)$, $\Inv^{3}_{\nr}(G)\{p\}=\Inv^{3}_{\nr}(G_{\red})\{p\}$. Moreover, by \cite[\S10]{Mer162} the restriction map $\Inv^{3}_{\ind}(G_{\red})\to \Inv^{3}_{\ind}(G)$ is injective and its image, denoted by $\Inv^{3}_{\red}(G)$, is called the subgroup of \emph{reductive indecomposable} invariants of $G$. Hence, we have
\begin{equation}\label{inclusionsindecomp}
	\Inv^{3}_{\nr}(G)\{p\}\subset \Inv^{3}_{\red}(G)\{p\}\subset \Inv^{3}(G)\{p\}.
\end{equation}

Assume that $G$ is a split reductive group over an algebraically closed field $F$. Then, the main result \cite[Theorem]{Mer162} shows that
\begin{equation}\label{oddprimeunramified}
\Inv^{3}_{\nr}(G)\{p\}=0 \text{ for any odd prime } p \text{ with }p\neq \operatorname{char}(F).
\end{equation}
Therefore, by (\ref{oddprimeunramified}) the statement of Proposition \ref{introprop} becomes equivalent to $\Inv^{3}_{\nr}(G)\{2\}=0$. According to (\ref{inclusionsindecomp}), it would be desirable to have the triviality of the reductive indecomposable invariants for the triviality of the unramified invariants. However, this is not the case for split semisimple groups of types $E_{6}$ and $E_{7}$ (see Propositions \ref{prop:typeE6} and \ref{prop:typeE7}).

In the proof of Proposition \ref{introprop}, a semisimple group of type $E_{6}$ can be treated by using a restriction-corestriction argument (see Lemma \ref{mainpropE6}). The main idea of the proof for a group $G$ of type $E_{7}$ is to consider a subgroup $H$ of maximal rank (of type $D_{6}\times A_{1}$) and then verify the inclusion
$\Inv^{3}(G)\{2\}\subseteq\Inv^{3}(H)\{2\}$ and the triviality of $\Inv^{3}_{\nr}(H)\{2\}$ (see Lemma \ref{Esevenlemma} and Proposition \ref{mainpropE7}). Indeed, we show that every semisimple group of a mixed Dynkin type $D_{6}\times A_{1}$ has trivial unramified degree $3$ cohomology group (Corollary \ref{keycorollary}). This approach differs from the methods used for classical groups \cite{Mer17}, \cite{Baek}, where we can describe torsors explicitly for the corresponding reductive groups.

The present paper is organized as follows. In Section \ref{Prelim} we recall the notions of cohomological invariants. In Section \ref{computationofred} we compute the reductive invariants of semisimple groups of types $E_{6}$, $E_{7}$, $D_{6}\times A_{1}$. In particular, we shall need such computations for type $D_{6}\times A_{1}$ to obtain the full description of the reductive invariants in the following section. In Section \ref{unramifiedinvariantsH}, we show the triviality of the unramified invariants of the groups of type $D_{6}\times A_{1}$. In Section \ref{finalsection}, using the results from the preceding section, we prove the main result.

\paragraph{\bf Acknowledgements.} 
This work was supported by Samsung Science and Technology
Foundation under Project Number SSTF-BA1901-02.

\section{Preliminaries on cohomological invariants}\label{Prelim}

In the present section, we introduce some notation and recall basic notions of degree $3$ invariants which will be used in the following sections. 

\subsection{Indecomposable invariants}
Let $G$ be a split semisimple group. Let $T$ be a split maximal torus of $G$ and let $T^{*}$ be the character group of $T$. Then, $G=\tilde{G}/\gmu$ for some central subgroup $\gmu$ and $\Lambda_{r}\subseteq T^{*}\subseteq \Lambda$, where $\tilde{G}$ is the corresponding simply connected cover of $G$ and $\Lambda_{r}$ (resp. $\Lambda$) denotes the weight lattice (resp. the root lattice) of $G$. Let $W$ be the Weyl group of $G$. The group $S^{2}(\Lambda)^{W}$ of $W$-invariant quadratic forms on $\Lambda$ will be denoted by $Q(G)$. Note that $Q(G\times G')=Q(G)\oplus Q(G')$ for a semisimple group $G'$.

Write $\tilde{G}=G_{1}\times \cdots \times G_{n}$ for some $n$, where $G_{i}$ is a split simple simply connected group. Let $W_{i}$ be the Weyl group of $G$ and let $\Lambda_{i}$ be the weight lattice of $G_{i}$, i.e., $W=W_{1}\times \cdots \times W_{n}$ and  $\Lambda=\bigoplus_{i=1}^{n} \Lambda_{i}$. Then, by \cite[\S3b]{Mer164} the group $Q(G_{i})=S^{2}(\Lambda_{i})^{W_{i}}$ of $W_{i}$-invariant quadratic forms on $\Lambda_{i}$ is generated by a single quadratic form $q_{i}$, i.e., 
$S^{2}(\Lambda_{i})^{W_{i}}=\Z q_{i}$ and the explicit forms of $q_{i}$ for all Dynkin types can be found in \cite[\S4]{Mer164}. Hence, every element $q$ of $Q(\tilde{G})$ can be uniquely written as $q=\sum_{i=1}^{n}d_{i}q_{i}$ for some $d_{i}\in \Z$. Let $w_{i,j}$ denote the fundamental weights of $G_{i}$. Then, the character group $T^{*}$ can be described in terms of generators $w_{i,j}$ and their relations. Using this, we can choose a $\Z$-basis of $T^{*}$, so that we can compute explicitly the subgroup $Q(G)=S^{2}(T^{*})\cap Q(\tilde{G})$ of $Q(\tilde{G})$ in terms of $q_{1},\dots, q_{n}$.

Let $Z[T^{*}]$ denote the group ring of $T^{*}$. For $\lambda\in T^{*}$, we denote by $\rho(\lambda)=\sum_{\theta\in W(\lambda)}e^{\theta}$, where $W(\lambda)$ is the $W$-orbit of $\lambda$. By \cite[\S3c]{Mer164}, the Chern class map $c_{2}:\Z[T^{*}]\to S^{2}(T^{*})$ given by $\sum_{i}e^{\lambda_{i}}\mapsto \sum_{i<j}\lambda_{i}\lambda_{j}$, $\lambda_{i}\in T^{*}$ induces a group homomorphism $c_{2}:\Z[T^{*}]^{W}\to Q(G)$. The image of the homomorphism $c_{2}$ is generated by $c_{2}\big(\rho(\lambda))=-\frac{1}{2}\sum_{\theta\in W(\lambda)}\theta^{2}$ and is denoted by $\Dec(G)$.
Then, by \cite[Theorem 3.9]{Mer164} we have
\begin{equation}\label{indqgdecg}
\Inv^{3}_{\ind}(G)\simeq Q(G)/\Dec(G).
\end{equation}
We remark that for any two semisimple groups $G$ and $G'$ we have
\begin{equation}\label{decequation}
 \Dec(G\times G')=\Dec(G)\oplus \Dec(G') \text{ and } \Dec(\bar{G})\subseteq \Dec(G)\subseteq \Dec(\tilde{G}),
\end{equation}
where $\bar{G}$ denotes the adjoint group of $G$.

By \cite[Theorem]{LM}, the isomorphism in (\ref{indqgdecg}) also holds for a split reductive group $G$ and the reductive indcomposable invariants can be computed by the following criteria.
\begin{proposition}\cite[Proposition 7.1]{LM}\label{propindecomp}
Let $G$ be a split semisimple group with the components of the Dynkin diagram of type $A$ or $D$ or $E$. Let $\alpha$ be an indecomposable invariant of $G$ corresponding to $q=\sum_{i=1}^{n}d_{i}q_{i}\in Q(G)$. Then, $\alpha$ is reductive indecomposable if and only if the order $|\bar{w}_{i,j}|$ in $\Lambda/T^{*}$ divides $d_{i}$ for all $i$ and $j$.
\end{proposition}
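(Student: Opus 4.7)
The plan is to use the isomorphism $\Inv^{3}_{\ind}(-)\cong Q(-)/\Dec(-)$ (extended to reductive groups by \cite[Theorem]{LM} as noted in the paragraph preceding the statement) and characterize the image of the restriction map $Q(G_{\red})/\Dec(G_{\red})\to Q(G)/\Dec(G)$, which is induced by the surjection of character lattices $T^{*}_{\red}\twoheadrightarrow T^{*}$ and hence, by functoriality of $S^{2}$, by the pushdown $S^{2}(T^{*}_{\red})^{W}\to S^{2}(T^{*})^{W}$. A split reductive cover $G_{\red}$ with commutator $G$ and connected center corresponds to a short exact sequence of $\mathbb{Z}[W]$-modules
\[
0\to M\to T^{*}_{\red}\to T^{*}\to 0,
\]
with $M$ a free $W$-trivial lattice (the character lattice of the central torus), containing $\Lambda_{r}$ compatibly with $\Lambda_{r}\subseteq T^{*}$, and with $T^{*}_{\red}/\Lambda_{r}$ torsion-free (this last condition encoding connectedness of the center).

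For the necessity direction, I would suppose $q=\sum d_{i}q_{i}\in Q(G)$ lifts modulo $\Dec(G)$ to some $q''\in Q(G_{\red})$. Over $\mathbb{Q}$, the module $T^{*}_{\red}\otimes\mathbb{Q}$ splits $W$-equivariantly as $V_{r}\oplus V_{M}$, where $V_{r}\cong T^{*}\otimes\mathbb{Q}$ carries the usual $W$-representation and $V_{M}=M\otimes\mathbb{Q}$ is $W$-trivial, so $q''=q_{V}+q_{M}$ orthogonally. Integrality of $q''$ on $\Lambda_{r}\subseteq T^{*}_{\red}$ forces $q_{V}\in Q(\tilde{G})$, i.e., $q_{V}=\sum e_{i}q_{i}$ with $e_{i}\in\mathbb{Z}$, and comparing pushdowns modulo $\Dec(G)$ identifies each $d_{i}$ with $e_{i}$. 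Evaluating integrality of $q''$ on lifts in $T^{*}_{\red}$ of elements of $\Lambda$ involving the fundamental weights $w_{i,j}$ then extracts $d_{i}w_{i,j}\in T^{*}$, i.e., $|\bar{w}_{i,j}|$ divides $d_{i}$.

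For the sufficiency direction, assume $|\bar{w}_{i,j}|$ divides $d_{i}$ for all $i,j$; equivalently, $d_{i}w_{i,j}\in T^{*}$. I would construct $G_{\red}$ explicitly via its character lattice. Let $M$ be free on a basis $\{e_{i,j}\}$ indexed by the fundamental weights, and define $T^{*}_{\red}\subseteq\Lambda\oplus M$ as the subgroup generated by $T^{*}\oplus 0$, by the pairs $(w_{i,j},e_{i,j})$, and by the elements $(0,d_{i}e_{i,j})$. The hypothesis makes the construction well defined and $W$-stable (since $w\cdot w_{i,j}-w_{i,j}\in\Lambda_{r}\subseteq T^{*}$ for all $w\in W$), with free kernel $M_{\red}:=T^{*}_{\red}\cap(0\oplus M)$ for the projection onto $T^{*}$, and $T^{*}_{\red}/\Lambda_{r}$ torsion-free. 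The required $q''\in S^{2}(T^{*}_{\red})^{W}$ is then $\sum d_{i}q_{i}$ on the $\Lambda$-component plus a compensating quadratic form on $M_{\red}$ enforcing integrality, and by construction its pushdown to $Q(G)$ recovers $q$.

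The hard part will be the necessity direction, namely extracting the clean divisibility condition from the integrality of $q''$ on $T^{*}_{\red}$. This requires careful tracking of the cocycle governing the extension $0\to M\to T^{*}_{\red}\to T^{*}\to 0$ together with the specific expressions of the generators $q_{i}$ on fundamental weights. The proposition's restriction to types $A$, $D$, $E$ reflects that for these simply-laced types $q_{i}$ admits a uniform description via the inverse Cartan matrix applied to the natural pairing, which makes the divisibility extraction tractable; non-simply-laced types would need an analogous but more intricate criterion involving root lengths.
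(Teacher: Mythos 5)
First, a point of reference: the paper does not prove this proposition at all --- it is quoted verbatim from Laackman--Merkurjev \cite[Proposition 7.1]{LM} --- so your proposal can only be measured against their argument. Your general framework is the right one: identify $\Inv^{3}_{\red}(G)$ with the image of $Q(G_{\red})\to Q(G)$ modulo $\Dec(G)$, and use the fact that the characters of the central torus form a $W$-trivial summand over $\mathbb{Q}$, so that any $q''\in S^{2}(T^{*}_{\red})^{W}$ splits rationally as $q_{V}+q_{M}$. That splitting is indeed the starting point in \cite{LM}. But both directions of your sketch have genuine gaps.

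In the sufficiency direction your explicit lattice is wrong: the subgroup of $\Lambda\oplus M$ generated by $T^{*}\oplus 0$, the pairs $(w_{i,j},e_{i,j})$, and the elements $(0,d_{i}e_{i,j})$ projects onto all of $\Lambda$, not onto $T^{*}$, since every fundamental weight occurs as the first coordinate of a generator. One checks that the cocharacter lattice of the derived group of the corresponding reductive group is then the full coroot lattice $\Lambda^{\vee}$, i.e.\ the derived group is the simply connected cover $\tilde{G}$ rather than $G$; so the construction exhibits $q$ as a reductive invariant of $\tilde{G}$ (where the statement is vacuous, all $|\bar{w}_{i,j}|$ being $1$) and says nothing about $\Inv^{3}_{\red}(G)$. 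In the necessity direction, the phrase ``lifts in $T^{*}_{\red}$ of elements of $\Lambda$'' has no meaning: the image of $T^{*}_{\red}$ in $\Lambda$ is exactly $T^{*}$, so a weight $w_{i,j}\notin T^{*}$ admits no lift, and the step in which the divisibility $|\bar{w}_{i,j}|\mid d_{i}$ is ``extracted'' --- which is the entire content of the proposition --- is not carried out. (Moreover, ``integrality of $q''$ on $\Lambda_{r}$'' conflates membership in $S^{2}(T^{*}_{\red})$, a condition on coefficients in the symmetric square of the character lattice, with integer-valuedness on lattice points; and the conclusion you draw from it, namely $q_{V}=\sum e_{i}q_{i}$ with $e_{i}\in\Z$, is already contained in the hypothesis $q\in Q(G)$.) What is actually needed, and what \cite{LM} supplies, is the polar bilinear form: pairing $q''$ against the coroots, which lie in the cocharacter lattice of $T_{\red}$ and are orthogonal to the central directions, combined with the simply-laced normalization $q_{i}(\alpha^{\vee})=1$ (so that $b_{q_{i}}$ identifies coweights with weights), converts integrality of $q''$ on $T^{*}_{\red}$ into the condition $d_{i}\bar{w}_{i,j}=0$ in $\Lambda/T^{*}$. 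You correctly flag this as the hard part, but it is precisely the part that is missing.
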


We shall use the following useful property for the computation of the invariants.
\begin{proposition}\cite[Proposition 7.1]{Mer162}\label{propunramified}
	Let $p$ be a prime integer and let $G$ be a split semisimple group over a field $F$. Let $\gmu$ be a central subgroup of $G$ whose order is not divisible by $p$. Then, $\Inv^{3}(G/\gmu)\{p\}\simeq \Inv^{3}(G)\{p\}$. Moreover, if $\operatorname{char}(F)\neq p$, then $\Inv^{3}_{\nr}(G/\gmu)\{p\}\simeq \Inv^{3}_{\nr}(G)\{p\}$.
\end{proposition}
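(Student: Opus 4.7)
The plan is to establish the natural pullback $\pi^{*}\colon \Inv^{3}(G/\gmu)\{p\}\to\Inv^{3}(G)\{p\}$ induced by the quotient $\pi\colon G\to G/\gmu$ as an isomorphism, with the same statement restricted to unramified invariants. The essential input is that $n=|\gmu|$ is coprime to $p$: on one hand, $n$ acts invertibly on any $p$-primary cohomology group; on the other, the connecting map $\partial\colon H^{1}(K,G/\gmu)\to H^{2}(K,\gmu)$ takes values in an $n$-torsion group, so for every $\xi\in H^{1}(K,G/\gmu)$ there is a finite separable extension $L/K$ whose degree $m$ is coprime to $p$ such that $\partial(\xi)_{L}=0$ and $\xi_{L}$ lifts to some $\tilde\xi\in H^{1}(L,G)$.

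For injectivity, I would fix $\beta\in\Inv^{3}(G/\gmu)\{p\}$ with $\pi^{*}\beta=0$ together with $\xi\in H^{1}(K,G/\gmu)$, choose such an $L$ and lift $\tilde\xi$, and observe that $\beta(\xi_{L})=\beta(\pi_{*}\tilde\xi)=(\pi^{*}\beta)(\tilde\xi)=0$. The projection formula $\cor_{L/K}\circ\res_{L/K}=m\cdot\id$ then gives $m\cdot\beta(\xi)=\cor_{L/K}\bigl(\beta(\xi_{L})\bigr)=0$, and since $\beta(\xi)\in H^{3}(K)\{p\}$ is $p$-primary, it must vanish.

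For surjectivity, given $\alpha\in\Inv^{3}(G)\{p\}$, I would define
\[\tilde\alpha(\xi)\;=\;\tfrac{1}{m}\,\cor_{L/K}\bigl(\alpha(\tilde\xi)\bigr)\in H^{3}(K)\{p\},\]
with $L$, $m=[L:K]$ and $\tilde\xi$ chosen as above, where division by $m$ is legal on the $p$-primary part. The main technical obstacle is well-definedness. Two lifts of $\xi_{L}$ differ by the action of $H^{1}(L,\gmu)$, which is $n$-torsion; the resulting discrepancy in $\alpha(\tilde\xi)$ after corestriction lies in the $n$-torsion subgroup of $H^{3}(K)$, which meets the $p$-primary part trivially. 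Different extensions $L$ and $L'$ can be compared by passing to a compositum and using transitivity of corestriction. Once well-definedness is in hand, $\tilde\alpha$ is clearly functorial, and the identity $\pi^{*}\tilde\alpha=\alpha$ is immediate: for $\tau\in H^{1}(K,G)$ one may take $L=K$ and $\tilde\tau=\tau$, giving $\tilde\alpha(\pi_{*}\tau)=\alpha(\tau)$.

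For the unramified statement under $\operatorname{char}(F)\neq p$, the containment $\pi^{*}\bigl(\Inv^{3}_{\nr}(G/\gmu)\{p\}\bigr)\subseteq\Inv^{3}_{\nr}(G)\{p\}$ is immediate from the definition, since pullback along $\pi$ only composes with $\pi_{*}$ on torsors. For the reverse containment, I would use that the corestriction $\cor_{L/K}\colon H^{3}(L)\{p\}\to H^{3}(K)\{p\}$ sends unramified elements to unramified elements, a standard consequence of its compatibility with residue maps along valuations extending a given valuation on $K$. Thus $\tilde\alpha$ is unramified whenever $\alpha$ is, and the isomorphism above restricts to the desired one on unramified invariants.
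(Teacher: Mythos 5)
The paper does not prove this statement---it is quoted verbatim from \cite[Proposition 7.1]{Mer162}---so your proposal has to stand on its own. Your overall strategy, a restriction--corestriction transfer showing that $\pi^{*}$ is an isomorphism on $p$-primary parts, is sound, and the injectivity half is complete: the lifting of $\xi_L$ after a prime-to-$p$ extension killing $\partial(\xi)\in H^{2}(K,\gmu)$ (possible because the index of a Brauer class has the same prime factors as its exponent), followed by $\cor_{L/K}\circ\res_{L/K}=m\cdot\id$, is exactly right. The problems are in the surjectivity half, at the very point you flag as the main obstacle.

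First, independence of the lift: you assert that since two lifts of $\xi_{L}$ differ by the action of the $n$-torsion group $H^{1}(L,\gmu)$, the discrepancy in $\alpha(\tilde\xi)$ is $n$-torsion. That inference is a non sequitur---the map $c\mapsto\alpha(c\cdot\tilde\xi)-\alpha(\tilde\xi)$ is not a homomorphism, so the torsion of the acting group says nothing about the discrepancy. The claim is true, but it needs its own argument: for instance, pass to a further extension $L''/L$ of degree dividing $|\gmu|$ (hence prime to $p$) over which the Kummer class $c$ dies, so that the two lifts coincide over $L''$, and apply $\cor_{L''/L}\circ\res_{L''/L}=[L'':L]$ to the $p$-primary difference. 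Second, independence of $L$: ``passing to a compositum'' fails as stated, because the compositum $LL'$ of two prime-to-$p$ extensions can have degree divisible by $p$ (two conjugate cubic fields with compositum of degree $6$, $p=2$), and then the division by $[LL':K]$ your comparison requires is unavailable; the computation only bounds the difference of the two candidate values by $p$-power torsion. The repair is to note that $L\otimes_{K}L'=\prod_{i}M_{i}$ with $\sum_{i}[M_{i}:K]=mm'$ prime to $p$, so some factor field $M_{i}$ has degree prime to $p$, and then reduce to the easy nested case $L\subseteq M_{i}\supseteq L'$. The same care is needed for the functoriality of $\tilde\alpha$, which you declare ``clearly'' but which again runs through a decomposition of $L\otimes_{K}K'$. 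The unramified half is fine granting the standard compatibility of corestriction with residue maps.
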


\subsection{Trace forms, Clifford and Arason invariants}\label{traceformsub}
Let $(A,\sigma)$ be a pair of central simple $F$-algebra $A$ of degree $2n$ with involution $\sigma$ of the first kind. The trace form $T_{\sigma}:A\to F$ is given by $T_{\sigma}(x)=\operatorname{Trd}(\sigma(x)x)$, where $\operatorname{Trd}$ denotes the reduced trace. The restriction of $T_{\sigma}$ to $\operatorname{Sym}(A,\sigma)$ and $\operatorname{Skew}(A,\sigma)$ will be denoted by $T^{+}_{\sigma}$ and $T^{-}_{\sigma}$, respectively. Hence, $T_{\sigma}=T^{+}_{\sigma}\perp T^{-}_{\sigma}$.

Let $Q=(a,b)$ be a quaternion algebra with the canonical involution $\gamma$ and let $\sigma$ be an involution of the first kind on $M_{2n}(F)$, where $a, b\in F^{\times}$. Assume that $-1\in (F^{\times})^{2}$. Then $T_{\gamma}=\langle\langle a, b\rangle\rangle$ and $T_{\sigma}=0$ in the Witt ring $W(F)$.

Let $F$ be a field of characteristic not $2$. We write $\langle a_{1},\ldots, a_{n}\rangle$ for the diagonal quadratic form $a_{1}x_{1}^{2}+\cdots +a_{n}x_{n}^{2}$. Let $W(F)$ be the Witt ring of classes of nondegenerate quadratic forms over $F$. The kernel of the dimension morphism $\dim : W(F)\to \Z/2\Z$ is called the fundamental ideal of $W(F)$ and is denoted by $I(F)$. The $n$-th power of the fundamental ideal $I(F)$, denoted by $I^{n}(F)$, is additively generated by $n$-fold Pfister forms $\langle\langle a_{1},\ldots, a_{n}\rangle\rangle:=\langle 1, -a_{1}\rangle\tens \cdots \langle 1, -a_{n}\rangle$.

For $n\leq 3$, there are well-defined homomorphisms 
	$\boldsymbol{\mathrm{e}}_{n}: I^{n}(K)\to H^{n}(F)$ given by $\langle\langle a_{1},\ldots, a_{n}\rangle\rangle\mapsto (a_{1})\cup \cdots \cup (a_{n})$ for any field extension $K/F$. Let $C(q)$ be the Clifford algebra of an even-dimensional quadratic form $q$. The class $c(q)$ of the Clifford algebra $C(q)$ in the Brauer group $\Br(K)$ is called the Clifford invariant of $q$. Then, we have $\boldsymbol{\mathrm{e}}_{2}(q)=c(q)$ under the canonical isomorphism $H^{2}(K)\simeq \Br(K)$. In particular, the cohomological invariant $\boldsymbol{\mathrm{e}}_{3}$ is called the Arason invariant.

\subsection{Reductive invariants for type $A_{1}$} Let $G=(\gSL_{2})^{n}/\gmu$ defined over an algebraically closed field $F$ of characteristic not $2$, where $n\geq 1$ and $\gmu$ is a central subgroup of $(\gSL_{2})^{n}$, i.e., $\gmu\subset (\gmu_{2})^{n}$. Let $G_{\red}=(\gGL_{2})^{n}/\gmu$. Then, the derived subgroup of $G_{\red}$ is $G$ and the center of $G_{\red}$ is a split torus. 

Let $R$ be the subgroup of the character group $\big((\gmu_{2})^{n}\big)^{*}=\bigoplus_{i=1}^{n}(\Z/2\Z)e_{i}$ whose quotient is the character group $\gmu^{*}$. Then, for any field extension $K/F$ we have a bijection 
\begin{equation*}
H^{1}(K, G_{\red})=\{(Q_{1},\ldots, Q_{n})\,|\, \sum_{i=1}^{n}r_{i}Q_{i}=0 \text{ in } \Br(K) \text{ for all } r=(r_{i})\in R\},
\end{equation*}
where $Q_{1},\ldots, Q_{n}$ are quaternion $K$-algebras (see \cite{Mer17}). For each canonical involution $\gamma_{i}$ on $Q_{i}$, the Clifford invariant of $T_{\gamma_{i}}$ coincides with the class of $Q_{i}$ in $\Br(K)$, i.e., $c(T_{\gamma_{i}})=Q_{i}$. Hence, in $\Br(K)$ we have 
\[c(\perp_{i=1}^{n}r_{i}T_{\gamma_{i}})=\sum_{i=1}^{n}r_{i}Q_{i}=0,\]
thus $\perp_{i=1}^{n}r_{i}T_{\gamma_{i}}\in I^{3}(K)$. Therefore, the Arason invariant $\boldsymbol{\mathrm{e}}_{3}$ induces the invariants for $G_{\red}$,
\[\boldsymbol{\mathrm{e}}_{3}[r]:H^{1}(K, G_{\red})\to H^{3}(K)\]
given by $\eta=(Q_{1},\ldots, Q_{n})\mapsto \boldsymbol{\mathrm{e}}_{3}(\perp_{i=1}^{n}r_{i}T_{\gamma_{i}})$. It is shown in \cite{Mer163} that every invariant for $G_{\red}$ is of the form $\boldsymbol{\mathrm{e}}_{3}[r]$. Moreover, 

\begin{lemma}\cite[Lemma 4.3]{Mer17}\label{LemtypeA}
Let $r\in R$ with at least $3$ nonzero components. Then, the invariant $\boldsymbol{\mathrm{e}}_{3}[r]$ for $G_{\red}$ is ramified.
\end{lemma}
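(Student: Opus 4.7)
The plan is to prove ramification by exhibiting a $G_{\red}$-torsor whose value under $\boldsymbol{\mathrm{e}}_{3}[r]$ has a nontrivial residue at a well-chosen discrete valuation. After permuting coordinates I may assume $r_{1}=r_{2}=r_{3}=1$. I would work over $K=F(a,b,c)$ with $a,b,c$ algebraically independent over $F$ and take
\[ Q_{1}=(a,b),\quad Q_{2}=(a,c),\quad Q_{3}=(a,bc),\quad Q_{i}=M_{2}(K)\text{ for }i\geq 4. \]
Then $(a,b)+(a,c)+(a,bc)=2(a,bc)=0$ in $\Br(K)$, so the Brauer relation coming from $r$ is satisfied, and this is a candidate torsor of $G_{\red}$.

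The next step is a short Witt-ring computation that turns $\perp_{i}r_{i}T_{\gamma_{i}}$ into a Pfister form. Multiplying the one-variable identity $\langle\langle b\rangle\rangle+\langle\langle c\rangle\rangle-\langle\langle bc\rangle\rangle=\langle\langle b,c\rangle\rangle$ by $\langle\langle a\rangle\rangle$ yields
\[ \langle\langle a,b\rangle\rangle+\langle\langle a,c\rangle\rangle-\langle\langle a,bc\rangle\rangle=\langle\langle a,b,c\rangle\rangle. \]
Since $-1\in F^{\times 2}$, one has $\langle 1,1\rangle=0$ in $W(F)$, hence $2\langle\langle a,bc\rangle\rangle=0$ in $W(K)$, and so $T_{\gamma_{1}}\perp T_{\gamma_{2}}\perp T_{\gamma_{3}}=\langle\langle a,b,c\rangle\rangle$ in $W(K)$. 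Consequently $\boldsymbol{\mathrm{e}}_{3}[r](\eta)=(a)\cup(b)\cup(c)\in H^{3}(K)$. At the discrete valuation $v$ of $K/F$ centred at $a=0$ (residue field $F(b,c)$) the residue map sends this class to $(b)\cup(c)\in H^{2}(F(b,c))$, which is the Brauer class of the quaternion algebra $(b,c)$; its further residue at $b=0$ is the nontrivial class $(c)\in H^{1}(F(c))$, so $(b)\cup(c)\neq 0$. Hence $\boldsymbol{\mathrm{e}}_{3}[r](\eta)\notin H^{3}_{\nr}(K)$, proving ramification.

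The main obstacle is verifying that $(Q_{1},\ldots,Q_{n})$ really defines a $G_{\red}$-torsor when $R\supsetneq\langle r\rangle$: the extra relations $r'\in R$ can impose Brauer-class equalities that the naive choice above violates. To address this I would enlarge the base field by adjoining independent transcendentals $s_{j},t_{j}$ for $j\geq 4$, replace $Q_{j}=M_{2}(K)$ with $Q_{j}=(s_{j},t_{j})$, and solve a chosen complement of $\langle r\rangle$ in $R$ one relation at a time using the freedom in these new symbols. Because the resulting trace forms $T_{\gamma_{j}}$ involve only the new transcendentals, they are unramified at the valuation $v$ above and so cannot cancel the residue $(b)\cup(c)$; this preserves the ramification argument while making $(Q_{1},\ldots,Q_{n})$ a genuine $G_{\red}$-torsor.
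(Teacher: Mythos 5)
First, a point of comparison: the paper does not prove this lemma at all --- it is imported verbatim from \cite[Lemma 4.3]{Mer17} --- so there is no in-paper argument to measure yours against. Your central computation is the natural one and is essentially what one expects Merkurjev's proof to contain: the specialization $Q_{1}=(a,b)$, $Q_{2}=(a,c)$, $Q_{3}=(a,bc)$ over $K=F(a,b,c)$, the Witt-ring identity $\langle\langle a,b\rangle\rangle+\langle\langle a,c\rangle\rangle+\langle\langle a,bc\rangle\rangle=\langle\langle a,b,c\rangle\rangle$ (valid here because $-1\in F^{\times 2}$ makes $W(K)$ $2$-torsion), and the double residue at $a=0$ and then $b=0$ are all correct.

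The genuine gap is in your final paragraph, and it is not merely technical. A relation $r'\in R$ whose support lies inside $\{1,2,3\}$ cannot be repaired by any choice of $Q_{j}$ for $j\geq 4$: if $e_{1}\in R$ every torsor has $Q_{1}$ split, and if $e_{1}+e_{2}\in R$ every torsor has $Q_{1}\simeq Q_{2}$, so the prescription $(a,b),(a,c),(a,bc)$ is not realized by any point of $H^{1}(K,G_{\red})$. Worse, in these cases the conclusion itself fails as literally stated: for $\gmu=1$ one has $R=\bigoplus_{i}(\Z/2\Z)e_{i}$, every $G_{\red}$-torsor is trivial, and $\boldsymbol{\mathrm{e}}_{3}[r]$ is the zero invariant (hence unramified) for every $r$. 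More generally, since $2q=0$ in $W(K)$ the assignment $r\mapsto\boldsymbol{\mathrm{e}}_{3}[r]$ is additive and kills every $r'\in R$ with at most two nonzero entries (such $r'$ force $Q_{i}=0$, resp.\ $Q_{i}\simeq Q_{j}$, so the corresponding trace forms vanish in $W(K)$); thus $\boldsymbol{\mathrm{e}}_{3}[r]=0$ whenever $r$ lies in the subgroup of $R$ generated by these short elements. Your proof therefore needs --- and must explicitly invoke --- the nondegeneracy hypothesis that $r$ does not lie in that subgroup, which is the situation in which the lemma is actually applied in Proposition \ref{invariantsHred}. Finally, even granting this, the proposed repair by fresh transcendentals $(s_{j},t_{j})$ does not work for coupled relations: $e_{1}+e_{4}\in R$ forces $Q_{4}\simeq(a,b)$, not a new symbol. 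The correct completion is to build a homomorphism $(\Z/2\Z)^{n}/R\to\Br(K)$ sending $\bar{e}_{1},\bar{e}_{2},\bar{e}_{3}$ to $(a,b),(a,c),(a,bc)$ and vanishing on a suitably chosen complement containing the images of the remaining coordinates in the support of $r$; all values then lie in $\{0,(a,b),(a,c),(a,bc)\}$, each represented by a quaternion algebra, and the coordinates of $r$ outside $\{1,2,3\}$ contribute trivially to the Witt class, so your residue computation survives. As written, this step is a real gap rather than a routine verification.
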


\section{Reductive invariants for types $E_{6}$, $E_{7}$, and $D_{6}\times A_{1}$}\label{computationofred}

In this section, we compute the group of reductive indecomposable invariants of semisimple groups of types $E_{6}$, $E_{7}$, and $D_{6}\times A_{1}$. Together with the results from \cite{Mer17} and \cite{Baek}, Propositions \ref{prop:typeE6} and \ref{prop:typeE7} finish the calculations of the reductive invariants for a semisimple group of a homogeneous Dynkin type (see also \cite[\S11]{BRZ}). In this section, we denote by $\dim(V)$ the dimension of a vector space $V$ over $\Z/2\Z$.

\begin{proposition}\label{prop:typeE6}
Let $G= (\gE_{6}\times \cdots \times \gE_{6})/\gmu$ with $n\, (\geq 1)$ copies of a split simple simply connected group $\gE_{6}$ of type $E_{6}$ and a central subgroup $\gmu\subseteq (\gmu_{3})^{n}$. Let $R$ be the subgroup of $\bigoplus_{i=1}^{n} (\Z/3\Z)e_{i}$ whose quotient is the character group $\gmu^{*}$. Then \[\Inv^{3}_{\red}(G)=(\Z/2\Z)^{n-m}\oplus (\Z/6\Z)^{m},\]
	where $m=\dim \langle e_{i}\,|\, e_{i}\in R\rangle$.  Moreover, we have
	\[\Inv^{3}_{\red}(G)\{2\}=(\Z/2\Z)^{n} \text{ and } \Inv^{3}_{\red}(G)\{3\}=(\Z/3\Z)^{m}.       \]
\end{proposition}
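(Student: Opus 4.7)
The plan is to compute $\Inv^{3}_{\red}(G)$ through the description $\Inv^{3}_{\ind}(G) \simeq Q(G)/\Dec(G)$ from \eqref{indqgdecg} and then isolate the reductive part via Proposition \ref{propindecomp}. First I would write down the character lattice $T^{*}$ of a maximal torus of $G$ as the preimage in $\Lambda = \bigoplus_{i=1}^{n}\Lambda_{i}$ of $R$ under the canonical surjection $\Lambda \to \Lambda/\Lambda_{r} = \bigoplus_{i=1}^{n}(\Z/3\Z)e_{i}$, giving the identification $\Lambda/T^{*} \simeq \gmu^{*}$. The image of a fundamental weight $w_{i,j}$ in $\Lambda/T^{*}$ is a multiple of the image of $e_{i}$, and since $E_{6}$ has at least one fundamental weight per factor projecting nontrivially to $\Lambda_{i}/\Lambda_{r,i}$, the order $|\bar{w}_{i,j}|$ equals $3$ for some $j$ precisely when $e_{i} \notin R$, and otherwise every $|\bar{w}_{i,j}|$ equals $1$. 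Thus the criterion of Proposition \ref{propindecomp} imposes the constraint ``$3 \mid d_{i}$'' exactly on those $i$ with $e_{i} \notin R$.

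Next I would compute $Q(G)$ and $\Dec(G)$ explicitly. Using the form of the generator $q_{i} \in S^{2}(\Lambda_{i})^{W_{i}}$ from \cite[\S4]{Mer164} together with $\Lambda_{r,i} \subseteq T^{*}$, one verifies $Q(G) = \bigoplus_{i} \Z q_{i}$. For $\Dec(G)$, I would expand $c_{2}(\rho(\lambda))$ for $\lambda = (\lambda_{1}, \ldots, \lambda_{n}) \in T^{*}$: the orbit factors as $W\lambda = \prod_{i} W_{i}\lambda_{i}$, and since $\Lambda_{i}^{W_{i}} = 0$, the orbit sum $\sum_{\theta_{i} \in W_{i}\lambda_{i}} \theta_{i}$ vanishes whenever $\lambda_{i} \ne 0$, so all cross terms $\theta_{i}\theta_{j}$ with $i \ne j$ drop out. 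Hence $\Dec(G) = \bigoplus_{i} a_{i}\Z q_{i}$, with $a_{i}$ determined by orbit sums of elements of $T^{*} \cap \Lambda_{i}$. Since $T^{*} \cap \Lambda_{i} = \Lambda_{i}$ when $e_{i} \in R$ and $T^{*} \cap \Lambda_{i} = \Lambda_{r,i}$ when $e_{i} \notin R$, this recovers $a_{i} = 6$ (the index $[Q(\gE_{6}) : \Dec(\gE_{6})]$) in the first case and $a_{i} = 2$ (the index for $\bar{\gE}_{6}$) in the second.

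Putting this together, $Q(G)/\Dec(G) \simeq \bigoplus_{i}\Z/a_{i}\Z$, and a class $(\bar{d}_{i})$ is reductive indecomposable iff each $\bar{d}_{i}$ admits a lift $d_{i} \in \Z$ satisfying $3 \mid d_{i}$ whenever $e_{i} \notin R$. For $e_{i} \in R$ no constraint applies and the $i$-th summand contributes $\Z/6\Z$; for $e_{i} \notin R$ the constraint is always achievable after adjusting $d_{i}$ by a multiple of $a_{i} = 2$ since $\gcd(2,3)=1$, so the $i$-th summand contributes the full $\Z/2\Z$. This yields $\Inv^{3}_{\red}(G) = (\Z/6\Z)^{m} \oplus (\Z/2\Z)^{n-m}$, and separating primes gives the asserted $2$- and $3$-primary parts. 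The main technical point is the factor-by-factor decomposition of $Q(G)$ and $\Dec(G)$, in particular the vanishing of cross terms; once this is in hand, the remainder is lattice bookkeeping combined with the coprimality $\gcd(2,3)=1$, which is precisely what prevents the ``$3 \mid d_{i}$'' constraint from shrinking the $(\Z/2\Z)^{n-m}$ summand.
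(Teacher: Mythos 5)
Your overall architecture matches the paper's: identify $T^{*}$ as the preimage of $R$ in $\Lambda$, observe via Proposition \ref{propindecomp} that the reductive condition is exactly ``$3\mid d_{i}$ whenever $e_{i}\notin R$'' (your order computation for the $\bar{w}_{i,j}$ agrees with the paper's \eqref{wijorder}), and read off the reductive subgroup of $Q(G)/\Dec(G)$. The genuine error is in your computation of $\Dec(G)$. You claim $\Dec(G)=\bigoplus_{i}a_{i}\Z q_{i}$ with $a_{i}=2$ for $e_{i}\notin R$, on the grounds that $2$ is ``the index for $\bar{\gE}_{6}$''. That value is impossible: by \eqref{decequation} we have $\Dec(\bar{\gE}_{6})\subseteq\Dec(\gE_{6})=6\Z q$, so $\Dec(\bar{\gE}_{6})$ cannot be $2\Z q$; Merkurjev's computation gives $\Dec(\bar{\gE}_{6})=\Dec(\gE_{6})=6\Z q$, and then $\Dec(G)=\bigoplus_{i}6\Z q_{i}$ follows at once from the sandwich \eqref{decequation} --- this is exactly \eqref{decE6}, and it renders your cross-term analysis of $c_{2}(\rho(\lambda))$ unnecessary for this proposition. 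Your value falsifies the intermediate assertion $\Inv^{3}_{\ind}(G)\simeq\bigoplus_{i}\Z/a_{i}\Z$ (the correct group is $(\Z/6\Z)^{n}$), and it also invalidates the justification of your final step, which relies on lifting any class modulo $a_{i}=2$ to a multiple of $3$ using $\gcd(2,3)=1$; with the correct modulus $6$ such a lift exists only for the classes in $3\Z/6\Z$.

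The stated conclusion nevertheless survives, by an accident of arithmetic: the image of $\{d_{i}:3\mid d_{i}\}$ in $\Z q_{i}/6\Z q_{i}$ is $3\Z q_{i}/6\Z q_{i}\simeq\Z/2\Z$, which happens to coincide with what your incorrect modulus $2$ produces. So the answer is right, but the proof as written asserts a false value of $\Dec(\bar{\gE}_{6})$ and reaches the correct group only because the reductive constraint masks the discrepancy. The fix is short: replace your paragraph on $\Dec(G)$ by the citation $\Dec(\gE_{6})=\Dec(\bar{\gE}_{6})=6\Z q_{i}$ together with \eqref{decequation}, and then note that the $i$-th summand of the reductive subgroup is $3\Z q_{i}/6\Z q_{i}\simeq\Z/2\Z$ for $e_{i}\notin R$ and $\Z q_{i}/6\Z q_{i}\simeq\Z/6\Z$ for $e_{i}\in R$. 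The remaining ingredients of your argument --- the identification $Q(G)=\bigoplus_{i}\Z q_{i}$ (valid here because $q_{i}$ already lies in $S^{2}(\Lambda_{r,i})^{W_{i}}$, unlike in type $A_{1}$), the description of $T^{*}$, and the prime decomposition at the end --- are correct and agree with the paper, which only differs in deriving the $2$-primary part independently via Proposition \ref{propunramified}.
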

\begin{proof}
Let $G=(\gE_{6})^{n}/\gmu$ for some central subgroup $\gmu\simeq (\gmu_{3})^{k}$ and let $q_{1},\cdots, q_{n}$ be the corresponding invariant quadratic forms for each copy of $\gE_{6}$ in $G$. By \cite[\S 4b]{Mer163} we have $\Dec(\gE_{6})=\Dec(\bar{\gE}_{6})=6\Z q_{i}$, thus by (\ref{decequation}) we obtain 
\begin{equation}\label{decE6}
\Dec(G)=6\Z q_{1}\oplus \cdots \oplus 6\Z q_{n}.
\end{equation}

Let $T=(\gm)^{6n}/\gmu$ be the split maximal torus of $G$ and let $R$ be the subgroup of $\bigoplus_{i=1}^{n}(\Z/3\Z) e_{i}$ whose quotient is the character group $\gmu^{*}$. Then, we have 
\begin{equation}\label{relationE}
R=\{r\in \bigoplus_{i=1}^{n}(\Z/3\Z) e_{i}\,|\, f_{p}(r)=0, 1\leq p\leq k\}
\end{equation}
for some linear polynomials $f_{p}\in \Z/3\Z[t_{1},\ldots, t_{n}]$ and the following commutative diagram of exact sequences
\begin{equation}\label{Tdiagram}
\xymatrix{
	0 \ar@{->}[r] & R \ar@{->}[r] & \bigoplus_{i} (\Z/3\Z)e_{i} \ar@{->}[r]  & \gmu^{*}  \ar@{=}[d]\ar@{->}[r] & 0\\
	0 \ar@{->}[r] & T^{*} \ar@{->>}[u]\ar@{->}[r]  & \bigoplus_{i,j}\Z w_{ij}\ar@{->>}[u]^{\Phi}\ar@{->}[r]& \gmu^{*} \ar@{->}[r] & 0\\
}
\end{equation}
where $T^{*}$ is the corresponding character group and 
\begin{equation}\label{middlevE6}
\Phi\big(\sum_{i,j} a_{i,j}w_{i,j}\big)=\sum_{i} (\overline{a_{i,1}+a_{i,5}+2a_{i,3}+2a_{i,6}})e_{i}    
\end{equation}
for $1\leq i\leq n$ and $1\leq j\leq 6$, where $w_{i,j}$ denote the fundamental weights for the $i$th component of the root system of $G$. Hence, it follows from (\ref{relationE}) and (\ref{Tdiagram}) that
\begin{equation}\label{TcharacterE}
	T^{*}=\{\sum a_{i,j}w_{i,j}\,|\, f_{p}(a_{1,1}+a_{1,5}+2a_{1,3}+2a_{1,6},\ldots, a_{n,1}+a_{n,5}+2a_{n,3}+2a_{n,6})\equiv 0\}.
\end{equation}

Let us denote $|\bar{w}_{i,j}|$ the order of the fundamental weight $\bar{w}_{i,j}$ in $\Lambda/T^{*}$. Obviously, $|w_{i,j}|$ is either $1$ or $3$. Moreover, by (\ref{TcharacterE}) we obtain
\begin{equation}\label{wijorder}
	|\bar{w}_{i,j}|=3 \text{ if and only if } e_{i}\not\in R \text{ and } j=1,3,5,6.
\end{equation}
Let $\alpha$ be an indecomposable invariant of $G$ corresponding to $q=\sum_{i=1}^{n}d_{i}q_{i}\in Q(G)$. Then, by Proposition \ref{propindecomp} and (\ref{wijorder}) $\alpha$ is reductive indecomposable if and only if $3\,|\, d_{i}$ for any $i$ with $e_{i}\not\in R$. Therefore, by (\ref{decE6}) we have
\begin{equation}\label{esixeqation}
\Inv^{3}_{\red}(G)=\big(\bigoplus_{e_{i}\not\in R}\Z q_{i}/2\Z q_{i}\big)\oplus \big(\bigoplus_{e_{i}\in R}\Z q_{i}/6\Z q_{i}\big).
\end{equation}

As $\Inv^{3}_{\red}(\gE_{6})\{2\}=\Inv^{3}(\gE_{6})\{2\}=\Z/2\Z$, by Proposition \ref{propunramified} and (\ref{twosplitsemisimple}) we obtain
\begin{equation*}
\Inv^{3}_{\red}(G)\{2\}=\Inv^{3}_{\red}\big((\gE_{6})^{n}\big)\{2\}=\big(\Inv^{3}_{\red}(\gE_{6})\{2\}\big)^{n}=(\Z/2\Z)^{n}, 
\end{equation*}
thus the second statement follows by (\ref{esixeqation}).
\end{proof}

Similar to the case of type $E_{6}$, we calculate the reductive indecomposable group for tyep $E_{7}$.

\begin{proposition}\label{prop:typeE7}
Let $G=(\gE_{7}\times \cdots \times \gE_{7})/\gmu$ with $n\, (\geq 1)$ copies of a split simple simply connected group $\gE_{7}$ of type $E_{7}$ and a central subgroup $\gmu\subseteq (\gmu_{2})^{n}$. Let $R$ be the subgroup of $\bigoplus_{i=1}^{n} (\Z/2\Z)e_{i}$ whose quotient is the character group $\gmu^{*}$ and let $l=\dim R$. Then, \[\Inv^{3}_{\red}(G)=(\Z/2\Z)^{l-m}\oplus (\Z/3\Z)^{n}\oplus (\Z/4\Z)^{m},\]
where $m=\dim \langle e_{i}\,|\, e_{i}\in R\rangle$. Moreover, we have
\[\Inv^{3}_{\red}(G)\{2\}=(\Z/2\Z)^{l-m}\oplus (\Z/4\Z)^{m} \text{ and } \Inv^{3}_{\red}(G)\{3\}=(\Z/3\Z)^{n}.       \]
\end{proposition}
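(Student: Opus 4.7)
The plan is to mirror the argument of Proposition~\ref{prop:typeE6}, adapting it to type $E_{7}$. First, I extract from~\cite[\S 4]{Mer163} the precise descriptions of $\Dec(\gE_{7})$ and $\Dec(\bar{\gE}_{7})$ as subgroups of $Q(\gE_{7})=\Z q$; unlike in the $E_{6}$ case, these two decomposable subgroups will differ at the prime $2$, and it is this discrepancy that produces the $\Z/4\Z$ factors in the final formula. Combining this information with~(\ref{decequation}) pins down $\Dec(G)\subseteq Q(\tilde G)=\bigoplus_{i=1}^{n}\Z q_{i}$.

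Next, I construct the commutative diagram of exact sequences analogous to~(\ref{Tdiagram}). Since $|\Lambda/\Lambda_{r}|=2$ for type $E_{7}$, there is a subset $S\subseteq\{1,\ldots,7\}$ consisting of the indices of fundamental weights not lying in $\Lambda_{r}$, and the middle vertical map takes the form
\[
\Phi\big(\sum_{i,j}a_{i,j}w_{i,j}\big)=\sum_{i}\big(\overline{\sum_{j\in S}a_{i,j}}\big)e_{i}\in\bigoplus_{i}(\Z/2\Z)e_{i}.
\]
A diagram chase then yields $T^{*}$ as the subgroup of $\bigoplus_{i,j}\Z w_{i,j}$ defined by linear congruences mod $2$ coming from the relations defining $R$. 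From this I read off
\[
|\bar{w}_{i,j}|=\begin{cases}2 & \text{if }e_{i}\notin R\text{ and }j\in S,\\ 1 & \text{otherwise,}\end{cases}
\]
so by Proposition~\ref{propindecomp} the indecomposable invariant associated with $\sum d_{i}q_{i}\in Q(G)$ is reductive indecomposable precisely when $2\mid d_{i}$ for every $i$ with $e_{i}\notin R$.

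Assembling the pieces, the computation of $\Inv^{3}_{\red}(G)$ reduces to a direct calculation inside $Q(G)/\Dec(G)$. Each of the $m$ ``free'' indices (those with $e_{i}\in R$) contributes the full group $\Inv^{3}(\gE_{7})=\Z/12\Z$, whose $2$-primary part $\Z/4\Z$ supplies the $(\Z/4\Z)^{m}$ summand; the $n-m$ ``coupled'' indices contribute only to the $3$-part at the single-factor level. The additional $2$-primary classes must therefore come from ``mixed'' generators $c_{2}(\rho(\lambda))$ with $\lambda\in T^{*}$ supported on more than one factor. Choosing a basis of $R$ consisting of the $m$ axis vectors $e_{i}$ (with $e_{i}\in R$) together with $l-m$ genuinely mixed vectors is expected to produce exactly $l-m$ independent classes of order $2$, yielding the $(\Z/2\Z)^{l-m}$ summand. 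Finally, the $3$-primary part is immediate from Proposition~\ref{propunramified}: since $|\gmu|$ is a power of $2$, we have $\Inv^{3}_{\red}(G)\{3\}=\Inv^{3}_{\red}(\tilde G)\{3\}=(\Inv^{3}(\gE_{7})\{3\})^{n}=(\Z/3\Z)^{n}$.

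The principal obstacle is the linear-algebraic bookkeeping required to establish the $(\Z/2\Z)^{l-m}$ contribution and to verify that the mixed classes are independent of the axis $\Z/4\Z$ classes modulo $\Dec(G)$. This parallels~\cite[Lemma 4.3]{Mer17} for type $A_{1}$; the larger $E_{7}$ root system makes the explicit realisation of the mixed classes more intricate, but the underlying combinatorial pattern should match.
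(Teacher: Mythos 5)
Your overall strategy---mirroring the $E_{6}$ argument, pinning down $\Dec(G)$ via (\ref{decequation}), computing the orders $|\bar{w}_{i,j}|$ in $\Lambda/T^{*}$ from the analogue of (\ref{Tdiagram}), applying Proposition \ref{propindecomp}, and getting the $3$-part from Proposition \ref{propunramified}---is the paper's strategy, and several intermediate steps (the criterion ``$2\mid d_{i}$ whenever $e_{i}\notin R$'', the $3$-primary computation) match. However, there are two genuine problems. First, your explanation of the $\Z/4\Z$ factors is incorrect: by \cite[\S 4b]{Mer163} one has $\Dec(\gE_{7})=\Dec(\bar{\gE}_{7})=12\Z q_{i}$, so the two decomposable subgroups do \emph{not} differ at the prime $2$; indeed it is precisely because they agree that (\ref{decequation}) squeezes $\Dec(G)$ to $\bigoplus_{i}12\Z q_{i}$ with no further work. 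The $\Z/4\Z$ summands arise instead because, for each of the $m$ indices with $e_{i}\in R$, the full generator $q_{i}$ is reductive indecomposable and $\Z q_{i}/12\Z q_{i}\simeq \Z/4\Z\oplus\Z/3\Z$. Relatedly, the $(\Z/2\Z)^{l-m}$ classes cannot come from ``mixed generators $c_{2}(\rho(\lambda))$'': such elements generate $\Dec(G)$ and are therefore killed in the quotient $Q(G)/\Dec(G)$, not represented by it.

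Second, and more seriously, you never determine $Q(G)=S^{2}(T^{*})\cap Q(\tilde{G})$, and the final assembly cannot be carried out without it. Unlike what happens implicitly for $E_{6}$, here $T^{*}$ imposes nontrivial congruences on the coefficients: following \cite[\S 3.1]{Baek} together with the description of $T^{*}$, one gets $Q(G)=\{\sum d_{i}q_{i}\mid f_{p}(d_{1},\ldots,d_{n})\equiv 0 \bmod 4\}$, where the $f_{p}$ are the linear forms cutting out $R$. It is the interaction of these mod-$4$ conditions with the divisibility condition from Proposition \ref{propindecomp} that yields the numerator generated by $\{q_{i}\mid e_{i}\in R\}$, $\{2\sum_{i} r_{i}q_{i}\mid r\in R\cap\bigoplus_{e_{i}\notin R}(\Z/2\Z)e_{i}\}$ and $\{4q_{i}\mid e_{i}\notin R\}$, whence $(\Z/12\Z)^{m}\oplus(\Z/2\Z)^{l-m}\oplus(\Z/3\Z)^{n-m}$ modulo $\Dec(G)$. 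Your sentence ``is expected to produce exactly $l-m$ independent classes of order $2$'' is exactly the step that requires this computation; as written, the proposal asserts the answer rather than deriving it, and the mechanism you propose for producing those classes (decomposable classes supported on several factors) is the wrong one.
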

\begin{proof}
Let $G=(\gE_{7})^{n}/\gmu$ for some central subgroup $\gmu\simeq (\gmu_{3})^{k}$ and let $l=\dim R$ (i.e.,  $l=n-k$). Then, the invariant quadratic form $q_{i}$ for each copy of $\gE_{7}$ in $G$ is given by
\begin{equation}\label{killingformE7}
q_{i}=\sum_{j=1}^{7}w_{i,j}^{2}-w_{i,1}w_{i,3}-w_{i,2}w_{i,4}-\sum_{j=3}^{6}w_{i,j}w_{i,j+1}.
\end{equation}

Since $\Dec(\gE_{7})=\Dec(\bar{\gE}_{7})=12\Z q_{i}$ for each copy of $\gE_{7}$ (see \cite[\S 4b]{Mer163}), by (\ref{decequation}) we have 
\begin{equation}\label{decE7}
\Dec(G)=12\Z q_{1}\oplus \cdots \oplus 12\Z q_{n}.
\end{equation}

Let $T=(\gm)^{7n}/\gmu$ be the split maximal torus of $G$ and let $R$ be the subgroup of $\bigoplus_{i=1}^{n}(\Z/2\Z) e_{i}$ as in (\ref{relationE}) for some linear polynomials $f_{p}\in \Z/2\Z[t_{1},\ldots, t_{n}]$ with $1\leq p\leq k$. Then, we have the same diagram (\ref{Tdiagram}), replacing the middle vertical map (\ref{middlevE6}) by
\begin{equation*}
\Phi\big(\sum_{i,j} a_{i,j}w_{i,j}\big)=\sum_{i} (\overline{a_{i,2}+a_{i,5}+a_{i7}})e_{i},  
\end{equation*}
thus by (\ref{Tdiagram}) we obtain
\begin{equation}\label{TcharacterE7}
T^{*}=\{\sum a_{i,j}w_{i,j}\,|\, f_{p}(a_{1,2}+a_{1,5}+a_{1,7},\ldots, a_{n,2}+a_{n,5}+a_{n,7})\equiv 0 \mod 2\}.
\end{equation} Hence, we have
\begin{equation}\label{wijordereseven}
|\bar{w}_{i,j}|=2 \text{ if and only if } e_{i}\not\in R \text{ and } j=2,5,7,
\end{equation}
where $|\bar{w}_{i,j}|$ denotes the order of $\bar{w}_{i,j}$ in $\Lambda/T^{*}$.

As the group $Q(\gE_{7}^{n})$ is generated by the invariant quadratic forms in $(\ref{killingformE7})$, every element of $Q(G)$ is of the form for some $q=\sum_{i=1}^{n}d_{i}q_{i}$ for some $d_{i}\in \Z$. We apply the same argument as in \cite[\S 3.1]{Baek} together with (\ref{TcharacterE7}). Then, we have
\begin{equation*}\label{QGEseven}
Q(G)=\{\sum_{i=1}^{n}d_{i}q_{i}\,|\, f_{p}(d_{1},\ldots, d_{n})\equiv 0 \mod 4\}.
\end{equation*}
Hence, by Proposition \ref{propindecomp} and (\ref{wijordereseven}) any reductive indecomposable invariant of $G$ corresponding to $q=\sum_{i=1}^{n}d_{i}q_{i}$ satisfies
\[f_{p}(\frac{\epsilon_{1}d_{1}}{2},\ldots, \frac{\epsilon_{n}d_{n}}{2})\equiv 0 \mod 2, \text{ where }\epsilon_{i}=\begin{cases} 1 & \text{ if } e_{i}\not\in R,\\ 2 & \text{ if } e_{i}\in R,\end{cases} \]
thus, we have \begin{equation}\label{indecomreducE7}
\Inv^{3}_{\red}(G)=\frac{\{\sum_{i=1}^{n}d_{i}q_{i}\,|\, f_{p}(\frac{\epsilon_{1}d_{1}}{2},\ldots, \frac{\epsilon_{n}d_{n}}{2})\equiv 0 \mod 2\}}{\Dec(G)}.
\end{equation}

Let $R'=R\cap \big(\bigoplus_{e_{i}\not\in R}(\Z/2\Z)e_{i}\big)$. Then, the group in the numerator of (\ref{indecomreducE7}) is generated by
\begin{equation*}
\{q_{i}\,|\,e_{i}\in R\}\cup \{\sum_{i=1}^{n}2r_{i}q_{i}\,|\,r=(r_{1},\ldots, r_{n})\in R'\}\cup \{4q_{i}\,|\, e_{i}\in R'\}.
\end{equation*}
Hence, the first statement immediately follows from (\ref{decE7}).

Since $\Inv^{3}_{\red}(\gE_{7})\{3\}=\Inv^{3}(\gE_{7})\{3\}=\Z/3\Z$, it follows by Proposition \ref{propunramified} and (\ref{twosplitsemisimple}) that
\begin{equation*}
\Inv^{3}_{\red}(G)\{3\}=\Inv^{3}_{\red}\big((\gE_{7})^{n}\big)\{3\}=\big(\Inv^{3}_{\red}(\gE_{7})\{3\}\big)^{n}=(\Z/3\Z)^{n}, 
\end{equation*}
thus the second statement follows by the first statement.
\end{proof}	

Now we compute the reductive indecomposable group of type $(D_{6}\times A_{1})^{n}$, which will be used to obtain the full description of the invariants (see Proposition \ref{invariantsHred}).

\begin{proposition}\label{indecomposableH}
Let $H=(\gSpin_{12}\times \gSL_{2})^{n}/\gmu$, where $n\geq 1$ and $\gmu$ is a central subgroup. Let $R$ be the subgroup of the character group $Z:=\bigoplus_{i=1}^{3n} (\Z/2\Z)e_{i}$ of the center of $\tilde{H}$ such that $\gmu^{*}=Z/R$. Let 
\begin{equation}\label{arbi}
\bar{R}=\{(\bar{r}_{1},\bar{r}_{2},\ldots, \bar{r}_{2n-1},\bar{r}_{2n})\in \bar{Z}\,\,|\,\, \sum_{j=1}^{n}(\bar{r}_{2j-1}e_{3j-2}+\bar{r}_{2j-1}e_{3j-1}+\bar{r}_{2j}e_{3j})\in R   \},
\end{equation}
where $\bar{Z}=\bigoplus_{j=1}^{n} \big((\Z/2\Z)\bar{e}_{2j-1}\oplus (\Z/2\Z)\bar{e}_{2j}\big)$. Set 
\begin{equation}\label{arbitwo}
J_{1}=\{1\leq j\leq n \,|\, e_{3j-1}, e_{3j-2}\in R\} \text{ with } l_{1}=|J_{1}| \text{ and } \bar{R}_{1}=\bar{R}\cap Z_{1}, \text{ where }
\end{equation}
$\bar{Z}_{1}=\big(\bigoplus_{j\not\in J_{1}}(\Z/2\Z)\bar{e}_{2j-1}  \big)\oplus \big(\bigoplus_{j=1}^{n}(\Z/2\Z)\bar{e}_{2j}  \big)$. Then, we have
\begin{equation*}
\Inv^{3}(H)_{\red}=(\Z/2\Z)^{m+l_{1}-l_{2}},
\end{equation*}
where $m=\dim (\bar{R}_{1})$ and $l_{2}=\dim\langle \bar{e}_{2j},\, \bar{e}_{2p}+\bar{e}_{2q}\in \bar{R}\,\,|\,\,  \bar{e}_{2p}, \bar{e}_{2q}\not\in \bar{R}\rangle$.
\end{proposition}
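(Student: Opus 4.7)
My plan is to follow the template of Propositions \ref{prop:typeE6} and \ref{prop:typeE7}, adapted to the mixed Dynkin type $D_6\times A_1$. First I identify the generators of $Q(\tilde H)$ and compute $\Dec(H)$; second I describe the character lattice $T^*$ via a commutative diagram analogous to (\ref{Tdiagram}); third I read off the orders $|\bar w_{j,k}|$ in $\Lambda/T^*$ together with the congruence conditions cutting out $Q(H)$ inside $Q(\tilde H)$; and finally, combining these with Proposition \ref{propindecomp}, I compute $\Inv^{3}_{\red}(H)=Q_{\red}(H)/\Dec(H)$ and simplify to the claimed form.

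Concretely, for each $j=1,\dots,n$, let $q_j^D$ and $q_j^A$ be the normalized invariant quadratic forms for the $j$-th copies of $\gSpin_{12}$ and $\gSL_2$ as in \cite[\S4]{Mer164}, so that $Q(\tilde H)=\bigoplus_{j=1}^n(\Z q_j^D\oplus \Z q_j^A)$ and every $q\in Q(H)$ has the form $\sum_j(d_j^D q_j^D+d_j^A q_j^A)$. Using the identifications $\bar w_{j,1}=\bar w_{j,3}=\bar w_{j,5}+\bar w_{j,6}$ and $\bar w_{j,2}=\bar w_{j,4}=0$ in $\Lambda/\Lambda_r$ for each $D_6$-component, together with $\bar w_{j,7}\mapsto e_{3j}$ for each $A_1$-component, the middle vertical map $\Phi$ in the analogue of diagram (\ref{Tdiagram}) becomes
\[
\Phi\Bigl(\sum_{j,k}a_{j,k}w_{j,k}\Bigr)=\sum_{j=1}^{n}\bigl(\overline{a_{j,1}+a_{j,3}+a_{j,5}}\,e_{3j-2}+\overline{a_{j,1}+a_{j,3}+a_{j,6}}\,e_{3j-1}+\overline{a_{j,7}}\,e_{3j}\bigr).
\]
Hence $T^*=\Phi^{-1}(R)$, and $|\bar w_{j,k}|=2$ exactly when $\Phi(w_{j,k})\notin R$ (and $=1$ otherwise).

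Next I compute $Q(H)=Q(\tilde H)\cap S^2(T^*)$ by the technique of \cite[\S3.1]{Baek}: the integrality of $\sum_j(d_j^D q_j^D+d_j^A q_j^A)$ on $T^*$ translates to a family of mod-$2$ (and possibly mod-$4$) congruences on the $d_j^D,d_j^A$ indexed by the linear forms $f_p$ defining $R$. Combining with Proposition \ref{propindecomp}---which, by the above orders, forces $d_j^D\in 2\Z$ whenever $j\notin J_1$ and $d_j^A\in 2\Z$ whenever $e_{3j}\notin R$---yields an explicit presentation of the reductive indecomposable sublattice $Q_{\red}(H)\subseteq Q(H)$. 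The final answer is then the quotient $Q_{\red}(H)/\Dec(H)$, where $\Dec(H)$ is computed from (\ref{decequation}) together with the standard values of $\Dec$ for $\gSpin_{12}$, $\gPGO^{+}_{12}$, $\gSL_2$, and $\gPGL_2$ (see \cite[\S4b]{Mer163}).

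The main obstacle is the final linear-algebra bookkeeping, since $\bar R$ may mix the $D_6$- and $A_1$-center characters arbitrarily. To extract the clean formula $m+l_1-l_2$, I would choose a basis of $\bar Z$ that separates (i) the $j\in J_1$ coordinates on which no parity is imposed on $d_j^D$ (contributing $l_1$ classes), (ii) the ``vector plus $A_1$'' relations surviving in $\bar R_1$ whose independent classes contribute $m$, and (iii) the pure-$A_1$ relations of $\bar R$ spanning the subspace of dimension $l_2$, each of which is already absorbed by $\Dec(H)$ because $\Dec(\gSL_2)$ contains $q_j^A$. Summing these three contributions then gives $\Inv^{3}_{\red}(H)=(\Z/2\Z)^{m+l_1-l_2}$.
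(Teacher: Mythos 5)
Your overall strategy is the same as the paper's: generators of $Q(\tilde H)$, the diagram computing $T^*$, the orders $|\bar w_{j,k}|$, Proposition \ref{propindecomp}, and then the quotient by $\Dec(H)$. The first three steps are carried out correctly (your $\Phi$ and the resulting parity conditions on the $d_j^D, d_j^A$ match (\ref{TcharacterH}) and (\ref{wtwojminusone})). The gap is in the treatment of $\Dec(H)$, which is the crux of this proposition. The relation (\ref{decequation}) together with the standard values of $\Dec$ for $\gSpin_{12}$, $\gPGO_{12}^{+}$, $\gSL_2$, $\gPGL_2$ only yields the sandwich $(4\Z q_{2j-1}\oplus 4\Z q_{2j})^{n}\subseteq \Dec(H)\subseteq (2\Z q_{2j-1}\oplus \Z q_{2j})^{n}$, which does not determine $\Dec(H)$ for a general central $\gmu$. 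You need to compute $c_{2}(\Z[T^*]^{W})$ directly, and in particular you give no argument for the upper bound on $\Dec(H)$ (that nothing beyond the group $D$ of (\ref{decompoH}) is decomposable); in the paper this requires the case analysis of $c_{2}(\rho(\lambda))$ according to how many simple components $\lambda$ meets, using the sign-switching subgroup $(\Z/2\Z)^{5}$ of the Weyl group of $D_{6}$ to show divisibility by $4$ for mixed orbits. Without this, you only get a surjection onto $\Inv^{3}_{\red}(H)$, not the claimed isomorphism.

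Moreover, your stated reason for item (iii) --- that the $l_{2}$-dimensional subspace is ``absorbed by $\Dec(H)$ because $\Dec(\gSL_{2})$ contains $q_{j}^{A}$'' --- is not correct. That $q_{2j}\in\Dec(\gSL_{2})$ only bounds $\Dec(H)$ from above via $\Dec(H)\subseteq\Dec(\tilde H)$; it does not place $q_{2j}$ in $\Dec(H)$. If it did, every pure-$A_{1}$ class would vanish and the answer would not involve $l_{2}$ at all. The correct mechanism has two parts: $q_{2j}\in\Dec(H)$ precisely when $w_{2j,1}\in T^{*}$ (i.e.\ $e_{3j}\in R$, so $j\in J_{3}$), since only then is $\rho(w_{2j,1})\in\Z[T^{*}]^{W}$ with $c_{2}(\rho(w_{2j,1}))=-q_{2j}$; and, crucially, the cross-term identity $c_{2}\bigl(\rho(w_{2p,1}+w_{2q,1})\bigr)=-2(q_{2p}+q_{2q})$ puts $2(q_{2p}+q_{2q})$ into $\Dec(H)$ whenever $w_{2p,1}+w_{2q,1}\in T^{*}$, even when neither summand lies in $T^{*}$. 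It is exactly this cross-term phenomenon that replaces $|J_{3}|$ by $l_{2}$ in the final count, and it is missing from your argument.
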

\begin{proof}
Let $H=(\gSpin_{12}\times \gSL_{2})^{n}/\gmu$ for some central subgroup $\gmu\simeq (\gmu_{2})^{k}$. Let $R$ be the subgroup of the character group $\bigoplus_{i=1}^{3n} (\Z/2\Z)e_{i}$ of the center of $\tilde{H}$ whose quotient is the character group $\gmu^{*}$ and let $T=(\gm)^{3n}/\gmu$ be the split maximal torus of $H$. Then, we have $R=\{r\in \bigoplus_{i=1}^{3n}(\Z/2\Z) e_{i}\,|\, f_{p}(r)=0, 1\leq p\leq k\}$ for some linear polynomials $f_{p}\in \Z/2\Z[t_{1},\ldots, t_{3n}]$ and the same commutative diagram (\ref{Tdiagram}) of exact sequences, replacing the middle vertical map (\ref{middlevE6}) by
\begin{equation*}
\Phi: \bigoplus_{1\leq j\leq n, 1\leq s\leq 6}(\Z w_{2j-1, s}\oplus \Z w_{2j-1,s})\to \bigoplus_{i=1}^{3n} (\Z/2\Z)e_{i},
\end{equation*}
which maps $\sum_{s}( a_{2j-1,s}w_{2j-1,s}+a_{2j,1}w_{2j,1})$ to
\[A_{j}:= (\overline{a_{2j-1,1}+a_{2j-1,3}+a_{2j-1,5}})e_{3j-2}+(\overline{a_{2j-1,1}+a_{2j-1,3}+a_{2j-1,6}})e_{3j-1}+\bar{a}_{2j,1}e_{3j}.\]
Hence, we obtain 
\begin{equation}\label{TcharacterH}
T^{*}=\{\sum_{j, s}( a_{2j-1,s}w_{2j-1,s}+a_{2j,1}w_{2j,1})\,\,|\,\, f_{p}(\sum_{j} A_{j})\equiv 0 \mod 2\}.
\end{equation} 
In particular, we have $|\bar{w}_{2j,1}|=2$ if and only if $e_{3j}\not\in R$ and
\begin{equation}\label{wtwojminusone}
|\bar{w}_{2j-1,5}|=2 \text{ (resp. } |\bar{w}_{2j-1,6}|=2) \text{ if and only if } e_{3j-2}\not\in R \text{ (resp. } e_{3j-1}\not\in R),
\end{equation}
where $|\bar{w}_{2j,1}|$ (resp. $|\bar{w}_{2j-1,s}|$) denotes the order of $\bar{w}_{2j,1}$ (resp. $\bar{w}_{2j-1,s}$) in $\Lambda/T^{*}$.

By \cite[\S 4b]{Mer164}, the group $Q(\tilde{H})$ is generated by the invariant quadratic forms
\begin{equation*}
	q_{2j-1}=(\sum_{s=1}^{6} w_{2j-1, s}^{2})-(w_{2j-1, 4}w_{2j-1, 6}+\sum_{s=1}^{4}w_{2j-1,s}w_{2j-1, s+1}) \text{ and } q_{2j}=w_{2j,1}^{2}.
\end{equation*}
Then, every element of $Q(H)$ is of the form $q=\sum_{j=1}^{n}(d_{2j-1}q_{2j-1}+d_{2j}q_{2j})$ for some $d_{2j-1}, d_{2j}\in \Z$. Let $J=\{1,\ldots, n\}$, $J_{1}=\{j\in J\,|\, e_{3j-1}, e_{3j-2}\in R\}$, and $J_{3}=\{j\in J\,|\, e_{3j}\in R\}$. Then, by Proposition \ref{propindecomp} and (\ref{wtwojminusone}), an indecomposable invariant corresponding to $q$ is reductive indecomposable if and only if 
\begin{equation*}
2\,|\,d_{2j-1} \text{ for all } j\not\in J_{1}\text{ and } 2\,|\,d_{2j} \text{ for all } j\not\in J_{3}.
\end{equation*}
Applying the same argument as in \cite[\S 3.3]{Baek} and \cite[Theorem 5.6]{Baek}, we see that any reductive indecomposable invariant of $H$ corresponding to $q$ satisfies
\[\bar{f}_{p}(\frac{\epsilon_{1}d_{1}}{2},\ldots, \frac{\epsilon_{2n}d_{2n}}{2})=0 \text{ with }\epsilon_{2j-1} \text{ (resp. } \epsilon_{2j})=\begin{cases} 1 & \text{ if } j\not\in J_{1} \text{ (resp. } j\not\in J_{3}),\\ 2 & \text{ if } j\in J_{1} \text{ (resp. } j\in J_{3}),\end{cases} \]
where $\bar{f}_{p}\in \Z/2\Z[t_{1}',\ldots, t_{2n}']$ denotes the image of $f_{p}$ under the following map
\[\Z/2\Z[t_{1},\ldots, t_{3n}]\to  \Z/2\Z[t_{1}',\ldots, t_{2n}'] \text{ given by } t_{3j-2}, t_{3j-1}\mapsto t_{2j-1}', t_{3j}\mapsto t_{2j}'.    \]
Hence, we get \begin{equation}\label{indecomreducHH}
\Inv^{3}_{\red}(H)=\frac{\{\sum_{i=1}^{n}d_{i}q_{i}\,|\, \bar{f}_{p}(\frac{\epsilon_{1}d_{1}}{2},\ldots, \frac{\epsilon_{2n}d_{2n}}{2})\equiv 0 \mod 2\}}{\Dec(G)}.
\end{equation}

Now we shall compute $\Dec(H)$. Since $\Dec(\gSpin_{12})=2\Z q_{2j-1}$ (resp. $\Dec(\gSL_{2})=\Z q_{2j}$) for each copy of $\gSpin_{12}$ (resp. $\gSL_{2}$) in $\tilde{H}$ and $\Dec(\gPGO_{12}^{+})=4\Z q_{2j-1}$ (resp. $\Dec(\gPGL_{2})=4\Z q_{2j}$) for each copy of $\gPGO_{12}^{+}$ (resp. $\gPGL_{2}$) in $\bar{H}$ (see \cite[\S 4b]{Mer164}), by (\ref{decequation}) we obtain
\begin{equation}\label{basicdecH}
(4\Z q_{2j-1}\oplus 4\Z q_{2j})^{n}\subseteq \Dec(H)\subseteq (2\Z q_{2j-1}\oplus \Z q_{2j})^{n}.
\end{equation}

Let $J_{2}=\{j\in J\,|\, e_{3j-2}+e_{3j-1}\in R\}$. Then, obviously we have $J_{1}\subseteq J_{2}$, $|J_{2}|=\dim \langle \bar{e}_{2j-1}\,|\, \bar{e}_{2j-1}\in \bar{R}\rangle$, and $|J_{3}|=\dim \langle \bar{e}_{2j}\,|\, \bar{e}_{2j}\in \bar{R}\rangle$. Let $l_{3}=|J_{3}|$ and $l_{2}=\dim\langle \bar{e}_{2j},\, \bar{e}_{2p}+\bar{e}_{2q}\in \bar{R}\,\,|\,\,  \bar{e}_{2p}, \bar{e}_{2q}\not\in \bar{R}\rangle$. We show that the decomposable group $\Dec(H)$ is isomorphic to
\begin{equation}\label{decompoH}
D:=\big(\bigoplus_{j\in J_{2}} 2\Z q_{2j-1}\big)\oplus \big(\bigoplus_{j\not\in J_{2}} 4\Z q_{2j-1}\big)\oplus \big(\bigoplus_{j\in J_{3}} \Z q_{2j}\big)\oplus \big(\bigoplus_{r=1}^{l_{2}-l_{3}} 2\Z q_{2r}'\big)\oplus \big(\bigoplus_{s=1}^{n-l_{2}} 4\Z q_{2s}''\big),
\end{equation}
where each $q_{2r}'$ (resp. $q_{2s}''$) is of the form $q_{2i}+q_{2j}$ (resp. $q_{2j}$) for some $i, j\not\in J_{3}$ such that $\bigoplus_{j\not\in J_{3}}\Z q_{2j}=\big(\bigoplus_{j=1}^{l_{2}-l_{3}} \Z q_{2r}'\big)\oplus \big(\bigoplus_{j=1}^{n-l_{2}} \Z q_{2s}''\big)$. Let $D_{p}$ denote $p$-th direct summand of $D$ for $1\leq p\leq 5$.

By (\ref{TcharacterH}), $e_{3j-2}+e_{3j-1}\in R$ (resp. $e_{3j}\in R$) if and only if $w_{2j-1,1}\in T^{*}$ (resp. $w_{2j,1}\in T^{*}$). Since $c_{2}\big(\rho(w_{2j-1, 1})\big)=-2q_{2j-1}$, $c_{2}\big(\rho(w_{2j,1})\big)=-q_{2j}$, and
\begin{equation}\label{eventwolambda}
 c_{2}\big(\rho(a_{2i}w_{2i,1}+a_{2j}w_{2j,1})\big)=-2(a_{2i}^{2}q_{2i}+a_{2j}^{2}q_{2j})
\end{equation}
for any $1\leq i\neq j\leq n$  and any nonzero integers $a_{2i}$ and $a_{2j}$, we get $D_{1}\oplus D_{3}\oplus D_{4}\subseteq \Dec(H)$. By (\ref{basicdecH}), we get $D_{2}\oplus D_{5}\subseteq \Dec(H)$, thus $D\subseteq \Dec(H)$.

A character $\lambda$ in the weight lattice $\Lambda=\bigoplus_{j=1}^{n} (\Lambda_{2j-1}\oplus \Lambda_{2j})$ of $H$ can be written as 
\begin{equation*}
\lambda=\lambda_{j_{1}}+\cdots +\lambda_{j_{v}}
\end{equation*}
for some nonzero characters $\lambda_{j_{u}}\in \Lambda_{2j_{u}-1}$ or $\Lambda_{2j_{u}}$. We show that $c_{2}(\rho(\lambda))\in D$ for all $\lambda\in T^{*}$. If $v=1$ and $\lambda_{j_{1}}\in \Lambda_{2j_{1}-1}$, then as $\Dec(\gHSpin_{12})=4\Z q_{j_{1}}$, $c_{2}(\rho(\lambda))$ is contained in either $D_{1}$ or $D_{2}$. Similarly, if $v=1$ and $\lambda_{j_{1}}\in \Lambda_{2j_{1}}$, then $c_{2}(\rho(\lambda))$ is contained in either $D_{3}$ or $D_{5}$. If $v=2$, $\lambda_{j_{1}}\in \Lambda_{2j_{1}}$, and $\lambda_{j_{2}}\in \Lambda_{2j_{2}}$, then by (\ref{eventwolambda}) we get $c_{2}(\rho(\lambda))\in D_{4}$. Finally, if $v\geq 3$ or $v=2$ with $\lambda_{j_{1}}\in \Lambda_{2j_{1}-1}$, then by the action of the normal subgroups $(\Z/2\Z)^{5}$ of the Weyl group of $H$ generated by sign switching, we see that $c_{2}(\rho(\lambda))$ is divisible by $4$, thus $c_{2}(\rho(\lambda))\in D_{2}\oplus D_{5}$. Hence, $\Dec(H)\subseteq D$.

Let $\bar{R}$ be the subgroup of $\bar{Z}$ as defined in (\ref{arbi}), i.e., 
\begin{equation*}
\bar{R}=\{\bar{r}=(\bar{r}_{1},\ldots,\bar{r}_{2n})\in \bar{Z}:=\bigoplus_{i=1}^{2n}(\Z/2\Z)\bar{e}_{i}\,\,|\,\, \bar{f}_{p}(\bar{r})\equiv 0 \mod 2   \}.
\end{equation*}
Consider the following subgroups
\begin{equation*}
\bar{Z}_{2}=\big(\bigoplus_{j\not\in J_{1}}(\Z/2\Z)\bar{e}_{2j-1}  \big)\oplus \big(\bigoplus_{j\not\in J_{3}}(\Z/2\Z)\bar{e}_{2j}  \big) \text{ and } \bar{R}_{2}=\bar{R}\cap \bar{Z}_{2}
\end{equation*}
of $\bar{Z}$. Then, the group in the numerator of (\ref{indecomreducHH}) is generated by
\begin{equation*}
\{q_{2j-1}\,|\, j\in J_{1}\}\cup \{q_{2j}\,|\, j\in J_{3}\}\cup \{\sum_{i=1}^{2n}2\bar{r}_{i}q_{i}\,|\, \bar{r}=(\bar{r}_{1},\ldots, \bar{r}_{2n})\in \bar{R}_{2}\}.
\end{equation*}
Hence, by (\ref{decompoH}) we have $\Inv^{3}_{\red}(H)=(\Z/2\Z)^{\dim(\bar{R}_{2})+l_{1}+l_{3}-l_{2}}$. As $\dim(\bar{R}_{2})=\dim(\bar{R}_{1})-l_{3}$, the statement follows.\end{proof}

\section{Unramified invariants for type $D_{6}\times A_{1}$}\label{unramifiedinvariantsH}

In the present section, we first give an explicit description of the torsors for the corresponding reductive groups of $H$, where $H$ is a semisimple group of type $(D_{6}\times A_{1})^{n}$ as defined in Proposition \ref{indecomposableH}  (see Lemma \ref{torsorH}). Then, using Proposition \ref{indecomposableH} we provide a full description of the corresponding cohomological invariants (Proposition \ref{invariantsHred}) and show the triviality of the unramified invariants for $H$ (Corollary \ref{keycorollary}). For $n\geq 1$, we shall write $\mathbf{\Omega}_{2n}$ and $\gPGO_{2n}^{+}$ for the extended Clifford group and the projective orthogonal group, respectively (see \cite[\S13]{KMRT}).

\begin{lemma}\label{torsorH}
Let $H=(\gSpin_{12}\times \gSL_{2})^{n}/\gmu$ over a field $F$ of characteristic not $2$, where $n\geq 1$ and $\gmu$ is a central subgroup. Let $R$ be the subgroup of $Z$ whose quotient is the character group $\gmu^{*}$, where $Z:=\bigoplus_{i=1}^{3n} (\Z/2\Z)e_{i}$ denotes the character group of the center $(\gSpin_{12}\times \gSL_{2})^{n}$. Set $H_{\red}=(\mathbf{\Omega}_{12}\times \gGL_{2})^{n}/\gmu$. Then, for any field extension $K/F$ the first Galois cohomology set $H^{1}(K, H_{\red})$ is bijective to the set of $2n$-tuples
\[\big((A_{1},\sigma_{1},f_{1}), Q_{1},\ldots, (A_{j},\sigma_{j},f_{j}), Q_{j},\ldots, (A_{n},\sigma_{n}, f_{n}), Q_{n}\big)\]
of triples consisting of a central simple $K$-algebra $A_{j}$ of degree $12$ with orthogonal involution $\sigma_{j}$ of trivial discriminant and a $K$-algebra isomorphism $f_{j}: Z\big(C(A_{j},\sigma_{j})\big)\simeq K\times K$, where $Z\big(C(A_{j},\sigma_{j})\big)$ denotes the center of the Clifford algebra $C(A_{j},\sigma_{j})$, and quaternion $K$-algebras $Q_{j}$ satisfying $\sum_{j=1}^{n}C_{j}=0$ in $\Br(K)$ for all $r=(r_{1},\ldots, r_{3n})\in R$, where
\[C_{j}:=r_{3j-2}C_{j,1}+r_{3j-1}C_{j,2}+r_{3j}Q_{j}\, \text{ or }\,r_{3j-2}C_{j,2}+r_{3j-1}C_{j,1}++r_{3j}Q_{j}\]
depending on the choice of two isomorphisms $f_{j}$ for each triple $(A_{j},\sigma_{j},f_{j})$, $C_{j,1}$ and $C_{j,2}$ denote simple $K$-algebras with $C(A_{j},\sigma_{j})=C_{j,1}\times C_{j,2}$.
\end{lemma}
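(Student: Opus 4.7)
The plan is to combine the explicit classification of $\mathbf{\Omega}_{12}$-torsors from \cite[\S 13]{KMRT} with the diagram-chasing method used for the $A_1$-case in \cite[Lemma 4.3]{Mer17} and recalled in Section \ref{traceformsub}. The central exact sequence
\begin{equation*}
1\to \gmu\to (\mathbf{\Omega}_{12}\times \gGL_2)^n\to H_{\red}\to 1
\end{equation*}
induces a long exact sequence of pointed Galois cohomology sets terminating at a connecting map $\partial:H^1(K,H_{\red})\to H^2(K,\gmu)$. Since $\gmu$ is of multiplicative type with character group $\gmu^*=Z/R$, Cartier duality dualises $0\to R\to Z\to \gmu^*\to 0$ into $1\to \gmu\to (\gmu_2)^{3n}\to R^{\vee}\to 1$, so that $H^2(K,\gmu)$ is computed from $\Br(K)[2]^{3n}\simeq H^2(K,(\gmu_2)^{3n})$ by imposing the relations coming from $R$.

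Next I would identify the Tits algebra attached to each generator $e_i$ of $Z$ for a given $H_{\red}$-torsor $\eta$. For $e_{3j}$, generating the character group of the centre of the $j$-th $\gSL_2$-factor, the Tits algebra is the quaternion algebra $Q_j$ coming from the projection of $\eta$ to $\gPGL_2$ in the $j$-th $\gGL_2$-slot, exactly as in the $A_1$-case. For $e_{3j-2}$ and $e_{3j-1}$, generating the character group of the centre of the $j$-th $\gSpin_{12}$-factor, \cite[\S 13]{KMRT} identifies the Tits algebras with the two simple components $C_{j,1}$ and $C_{j,2}$ of $C(A_j,\sigma_j)$, with the ordering fixed by the isomorphism $f_j$. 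The triviality of the discriminant of $\sigma_j$ and the existence of $f_j$ are precisely what lifting from a $\gPGO^+_{12}$-torsor to an $\mathbf{\Omega}_{12}$-torsor records, while Hilbert~90 kills $H^1(K,\gGL_2)$, so that the quaternion algebras $Q_j$ appear only as Tits data after the central quotient.

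Combining these identifications, the vanishing condition $\partial(\eta)=0$ becomes the stated Brauer-class relations $\sum_{j=1}^n C_j=0$ for all $r\in R$. Bijectivity of the assignment $\eta\mapsto \bigl((A_j,\sigma_j,f_j),Q_j\bigr)_j$ onto tuples satisfying these relations then follows by a standard lifting argument: any such tuple defines a class in $\Ker(\partial)$, hence lifts to an $(\mathbf{\Omega}_{12}\times \gGL_2)^n$-torsor whose image recovers the tuple, while injectivity reduces to the uniqueness of the underlying $\mathbf{\Omega}_{12}$-factor data. The main obstacle is the bookkeeping required to match $e_{3j-2},e_{3j-1}$ consistently with $C_{j,1},C_{j,2}$: the two choices of $f_j$ are swapped by the outer automorphism of the Dynkin diagram $D_6$ exchanging the two half-spin representations, which is the origin of the two alternative formulas for $C_j$ in the statement.
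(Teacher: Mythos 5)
Your argument hinges on the central isogeny $1\to \gmu\to (\mathbf{\Omega}_{12}\times \gGL_2)^n\to H_{\red}\to 1$ and on characterizing $H^{1}(K,H_{\red})$ as the kernel of the connecting map $\partial\colon H^{1}(K,H_{\red})\to H^{2}(K,\gmu)$. This is the wrong obstruction, in two ways. First, $\Ker(\partial)$ is only the image of $H^{1}\big(K,(\mathbf{\Omega}_{12}\times \gGL_2)^n\big)$ inside $H^{1}(K,H_{\red})$; the lemma describes \emph{all} of $H^{1}(K,H_{\red})$, including classes with $\partial\neq 0$, so imposing $\partial(\eta)=0$ describes the wrong set (and even for that set you would still have to describe $H^{1}$ of the middle group and quotient by the twisting action of $H^{1}(K,\gmu)$, which you do not address). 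Second, the relations $\sum_{j}C_{j}=0$ for $r\in R$ are not the vanishing of your $\partial$: the tuples $\big((A_j,\sigma_j,f_j),Q_j\big)$ are a priori elements of $H^{1}\big(K,(\gPGO^{+}_{12}\times\gPGL_2)^n\big)$, and the relations are exactly the obstruction to lifting such an \emph{adjoint} torsor up to $H_{\red}$, not the obstruction to lifting an $H_{\red}$-torsor further up to $(\mathbf{\Omega}_{12}\times\gGL_2)^n$. The paper therefore works with the other exact sequence, $1\to (\gm)^{3n}/\gmu\to H_{\red}\to (\gPGO^{+}_{12}\times\gPGL_2)^n\to 1$: since the kernel is a central torus, Hilbert 90 (via \cite[Proposition 42]{Serre}) makes $H^{1}(K,H_{\red})\to H^{1}(K,(\gPGO^{+}_{12}\times\gPGL_2)^n)$ injective with image the kernel of the connecting map into $H^{2}$ of the torus, which factors through the Tits-algebra map to $(\Br_2(K))^{3n}$ followed by the map induced by $(\gmu_2)^{3n}\to(\gmu_2)^{3n}/\gmu$; testing against the characters $r\in R=\big((\gmu_2)^{3n}/\gmu\big)^{*}$ yields precisely the stated relations.

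Two smaller inaccuracies: the triviality of $\disc(\sigma_j)$ and the isomorphism $f_j$ are already part of the $\gPGO^{+}_{12}$-torsor data (that is what distinguishes $\gPGO^{+}$ from $\gPGO$), not something recorded by lifting to $\mathbf{\Omega}_{12}$; and your description of $H^{2}(K,\gmu)$ via Cartier duality ignores the contribution of $H^{1}(K,R^{\vee})$ in the long exact sequence, so the map $H^{2}(K,\gmu)\to H^{2}(K,(\gmu_2)^{3n})$ need not be injective. Your closing remark about the two choices of $f_j$ being swapped by the outer automorphism of $D_6$ is consistent with the paper's bookkeeping, but it does not rescue the main structural error.
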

\begin{proof}
Let $H_{\red}=(\mathbf{\Omega}_{12}\times \gGL_{2})^{n}/\gmu$. Consider the natural exact sequence
\begin{equation*}
1\to (\gm)^{3n}/\gmu\to H_{\red}\to (\gPGO_{12}^{+}\times \gPGL_{2})^{n}\to 1.
\end{equation*}
Then, by \cite[Proposition 42]{Serre} together with Hilbert Theorem $90$ the set $H^{1}(K, H_{\red})$ is bijective to the kernel of the connecting map
\begin{equation}\label{connectingmaplemma}
\big(H^{1}(K, \gPGO_{12}^{+})\times H^{1}(K, \gPGL_{2})\big)^{n} \stackrel{\alpha}\to (\Br_{2}(K))^{3n}\stackrel{\beta}\to H^{2}\big(K,(\gmu_{2})^{3n}/\gmu\big)
\end{equation}
for any field extension $K/F$.

The first set in (\ref{connectingmaplemma}) is identified with the set of $2n$-tuples $\big((A_{j},\sigma_{j},f_{j}), Q_{j}\big)_{1\leq j\leq n}$ of triples consisting of a central simple $K$-algebra $A_{j}$ of degree $12$ with orthogonal involution $\sigma_{j}$ of trivial discriminant and a $K$-algebra isomorphism $f_{j}: Z\big(C(A_{j},\sigma_{j})\big)\simeq K\times K$, where $Z\big(C(A_{j},\sigma_{j})\big)$ denotes the center of the Clifford algebra $C(A_{j},\sigma_{j})=C_{j,1}\times C_{j,2}$, and quaternion $K$-algebras $Q_{j}$. The image of $\big((A_{j},\sigma_{j},f_{j}), Q_{j}\big)_{1\leq j\leq n}$ under the map $\alpha$ is the $3n$-tuple $(C_{j}')_{1\leq j\leq n}$ with $C_{j}':=(C_{j,1}, C_{j,2}, Q_{j})  \,\text{ or }\, (C_{j,2}, C_{j,1}, Q_{j})$ depending on the choice of two isomorphisms $f_{j}$ for each triple $(A_{j},\sigma_{j}, f_{j})$ (i.e., the image of $(A_{j},\sigma_{j}, f_{j})$  under $\alpha$ is $(C_{j,1}, C_{j,2})$ if and only if the image of $(A_{j},\sigma_{j}, f'_{j})$ for another isomorphism $f'_{j}: Z(C(A_{j}, \sigma_{j})\big)\simeq K\times K$ under $\alpha$ is $(C_{j,2}, C_{j,1})$) and the map $\beta$ is induced by the natural surjection $(\gmu_{2})^{3n}\to (\gmu_{2})^{3n}/\gmu$.

Since $(C'_{j})\in \Ker(\beta)$ if and only if it is contained in
\begin{equation*}
	\Ker\big((\Br_{2}(K))^{3n}\stackrel{\beta}\to H^{2}(K,(\gmu_{2})^{3n}/\gmu)\stackrel{r_{*}}\to H^{2}(K, \gm)\big)
\end{equation*}
for every $r\in R=\big( (\gmu_{2})^{3n}/\gmu\big)^{*}$, the statement follows.\end{proof}

Fix $j\in J_{1}=\{j\in J\,|\, e_{3j-1}, e_{3j-2}\in R\}$. Then, by Lemma \ref{torsorH} every $\eta=\big((A_{1},\sigma_{1},f_{1}), Q_{1}, \ldots, (A_{n},\sigma_{n},f_{n}), Q_{n} \big)$ in  $H^{1}(K,H_{\red})$ satisfies the relations $A_{j}=C_{j,1}=C_{j,2}=0$ in $\Br(K)$. Hence, $(A_{j},\sigma_{j},f_{j})\simeq (M_{12}(K), \sigma_{\psi_{j}})$ for some adjoint involution $\sigma_{\psi_{j}}$ with respect to a quadratic form $\psi_{j}$ with $\psi_{j}\in I^{3}(K)$. Therefore, the Arason invariant $\boldsymbol{\mathrm{e}}_{3}$ induces the following nontrivial invariant
\begin{equation}\label{projectioninvariant}
\boldsymbol{\mathrm{e}}_{3,j}: H^{1}(K,H_{\red})\to H^{3}(K)
\end{equation}
defined by $\eta\mapsto \boldsymbol{\mathrm{e}}_{3}(\psi_{j})$.

For $\bar{r}\in \bar{R}_{1}$ and $\eta=\big((A_{j},\sigma_{j},f_{j}), Q_{j}\big)\in H^{1}(K,H_{\red})$, where $\bar{R}_{1}$ is the group defined as in (\ref{arbitwo}), we define
\begin{equation}\label{phir}
\phi[\bar{r},\eta]:=\perp_{j=1}^{n}(\bar{r}_{2j-1}T^{+}_{\sigma_{j}}\perp \bar{r}_{2j}T_{\gamma_{j}}),
\end{equation}
where $T^{+}_{\sigma_{j}}$ denotes the restriction of $T_{\sigma_{j}}$ on $\operatorname{Sym}(A_{j}, \sigma_{j})$ and $\gamma_{j}$ denotes the canonical involution on $Q_{j}$.

Now we define the cohomological invariants for $H_{\red}$. Assume that $-1\in (F^{\times})^{2}$. Since both quadratic forms $T^{+}_{\sigma_{j}}$ and $T_{\gamma_{j}}$ in (\ref{phir}) have even dimension, the quadratic form $\phi[\bar{r},\eta]$ have even dimension. Since $\disc(\sigma_{j})=1$, by \cite[Proposition 11.5]{KMRT} $\disc(T^{+}_{\sigma_{j}})=1$. As $\disc(\gamma_{j})=1$, we have $\disc(\phi[\bar{r},\eta])=1$, thus $\phi[\bar{r},\eta]\in I^{2}(K)$.

By \cite[Theorem 1]{Que}, we obtain $w_{2}(T^{+}_{\sigma_{j}})=A_{j}$ and $w_{2}(T_{\gamma_{j}})=Q_{j}$ in $\Br(K)$, where $w_{2}$ denotes the Hasse invariant. Hence, we have
\begin{equation}\label{wtwohasse}
w_{2}(\phi[\bar{r},\eta])=\sum_{j=1}^{n}(\bar{r}_{2j-1}w_{2}(T^{+}_{\sigma_{j}})+\bar{r}_{2j}w_{2}(T_{\gamma_{j}}))=\sum_{j=1}^{n}(\bar{r}_{2j-1}A_{j}+\bar{r}_{2j}Q_{j}).
\end{equation}
Since $A_{j}=\bar{r}_{2j-1}C_{j,1}+\bar{r}_{2j-1}C_{j,2}$ in $\Br(K)$, where $C(A_{j}, \sigma_{j})=C_{j,1}\times C_{j,2}$, by Lemma \ref{torsorH} the last term in (\ref{wtwohasse}) becomes trivial, i.e., $w_{2}(\phi[\bar{r},\eta])=0$. Let $c:I^{2}(K)\to \Br_{2}(K)$ be the Clifford invariant. Since the Clifford invariant coincides with the Hasse invariant and $\Ker(c)=I^{3}(K)$, we obtain $\phi[\bar{r},\eta]\in I^{3}(K)$. Hence, the Arason invariant induces the following invariant for $H_{\red}$
\[\boldsymbol{\mathrm{e}}_{3}[\bar{r}]:H^{1}(K, H_{\red})\to H^{3}(K)\]
given by $\eta\mapsto \boldsymbol{\mathrm{e}}_{3}(\phi[\bar{r},\eta])$. Below, we show that every degree $3$ normalized invariant for $H_{\red}$ has the above two forms.

\begin{proposition}\label{invariantsHred}
Let $H=(\gSpin_{12}\times \gSL_{2})^{n}/\gmu$, and $H_{\red}=(\mathbf{\Omega}_{12}\times \gGL_{2})^{n}/\gmu$ defined over an algebraically closed field $F$ of characteristic not $2$, where $n\geq 1$ and $\gmu$ is a central subgroup. Then, the invariants $\boldsymbol{\mathrm{e}}_{3, j}$ and $\boldsymbol{\mathrm{e}}_{3}[\bar{r}]$ induce a surjection \[\bigoplus_{j\in J_{1}}(\Z/2\Z)\bar{e}_{2j-1}\bigoplus\bar{R}_{1}\to \Inv^{3}(H_{\red})_{\norm}\]  
given by $\bar{e}_{2j-1}\mapsto \boldsymbol{\mathrm{e}}_{3, j}$ for $j\in J_{1}$ and $\bar{r}\mapsto \boldsymbol{\mathrm{e}}_{3}[\bar{r}]$ for $\bar{r}\in \bar{R}_{1}$. Moreover, the kernel of this morphism is given by the subgroup generated by $\{\bar{e}_{2j}\,|\, \bar{e}_{2j}\in \bar{R}_{1}\}\cup \{\bar{e}_{2p}+\bar{e}_{2q}\,|\, \bar{e}_{2p}+\bar{e}_{2q}\in \bar{R}_{1}, \bar{e}_{2p}, \bar{e}_{2q}\not\in \bar{R}_{1}\}$, i.e.,
\[\Inv^{3}(H_{\red})_{\norm}\simeq \frac{\bigoplus_{j\in J_{1}}(\Z/2\Z)\bar{e}_{2j-1}\bigoplus\bar{R}_{1}}{\langle \bar{e}_{2j},\, \bar{e}_{2p}+\bar{e}_{2q}\in \bar{R}_{1}\,\,|\,\,  \bar{e}_{2p}, \bar{e}_{2q}\not\in \bar{R}_{1} \rangle}.\]
\end{proposition}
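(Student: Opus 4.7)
The plan is to verify three claims about the homomorphism $\Phi\colon \bigoplus_{j\in J_{1}}(\Z/2\Z)\bar{e}_{2j-1}\oplus\bar{R}_{1}\to\Inv^{3}(H_{\red})_{\norm}$: that it is well-defined and additive, that it kills the claimed subgroup $K'$, and that it is surjective. Additivity is automatic because $\phi[\bar{r},\eta]$ is additive in $\bar{r}$ (the orthogonal summands $\bar{r}_{2j-1}T^{+}_{\sigma_{j}}$ and $\bar{r}_{2j}T_{\gamma_{j}}$ accumulate componentwise in $W(K)$) and the Arason invariant $\boldsymbol{\mathrm{e}}_{3}$ is a homomorphism on $I^{3}(K)$.

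For the kernel containment, I would treat the two families of generators separately. If $\bar{e}_{2j}\in\bar{R}_{1}$, then by (\ref{arbi}) we have $e_{3j}\in R$; specializing the constraint of Lemma~\ref{torsorH} to $r=e_{3j}$ forces $Q_{j}=0$ in $\Br(K)$ for every $\eta$, so $T_{\gamma_{j}}=0$ in $W(K)$ and $\boldsymbol{\mathrm{e}}_{3}[\bar{e}_{2j}]=0$. If instead $\bar{e}_{2p}+\bar{e}_{2q}\in\bar{R}_{1}$ with $\bar{e}_{2p},\bar{e}_{2q}\notin\bar{R}_{1}$, then $e_{3p}+e_{3q}\in R$, and the same lemma gives $Q_{p}\simeq Q_{q}$ in $\Br(K)$, whence $T_{\gamma_{p}}\perp T_{\gamma_{q}}\equiv 2\,T_{\gamma_{p}}$ in $W(K)$. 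Since $-1\in(F^{\times})^{2}\subseteq(K^{\times})^{2}$ makes $W(K)$ of additive exponent $2$, this form vanishes and $\boldsymbol{\mathrm{e}}_{3}[\bar{e}_{2p}+\bar{e}_{2q}]=0$.

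For surjectivity, I would pass through the identification $\Inv^{3}(H_{\red})_{\norm}=\Inv^{3}_{\ind}(H_{\red})\simeq\Inv^{3}_{\red}(H)\subseteq Q(H)/\Dec(H)$ (the first equality because $F$ is algebraically closed). Under this correspondence, $\boldsymbol{\mathrm{e}}_{3,j}$ is represented by the class of $q_{2j-1}$: for $j\in J_{1}$ the algebra $A_{j}$ splits, so $(A_{j},\sigma_{j},f_{j})$ is adjoint to a form $\psi_{j}\in I^{3}(K)$, and the Arason invariant of $\psi_{j}$ realizes the standard Rost-type invariant attached to $q_{2j-1}$ of $\gSpin_{12}$. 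Similarly, $\boldsymbol{\mathrm{e}}_{3}[\bar{r}]$ is represented by $\sum_{i}2\bar{r}_{i}q_{i}$, via the analogous relation between the trace forms $T^{+}_{\sigma_{j}},T_{\gamma_{j}}$ and the root-lattice forms $q_{2j-1},q_{2j}$. By the proof of Proposition~\ref{indecomposableH}, and using that $q_{2j}\in\Dec(H)$ for $j\in J_{3}$, the classes $\{q_{2j-1}\mid j\in J_{1}\}\cup\{\sum_{i}2\bar{r}_{i}q_{i}\mid\bar{r}\in\bar{R}_{2}\}$ generate $\Inv^{3}_{\red}(H)$; since $\bar{R}_{2}\subseteq\bar{R}_{1}$, these all lie in the image of $\Phi$.

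To conclude, note that $|K'|=2^{l_{2}}$ since the elements defining $K'$ coincide with those used to define $l_{2}$ in Proposition~\ref{indecomposableH} (all such $\bar{e}$'s lie automatically in $\bar{Z}_{1}$). The domain has order $2^{l_{1}+m}$ and the target has order $2^{l_{1}+m-l_{2}}$ by Proposition~\ref{indecomposableH}. Surjectivity of $\Phi$ together with $K'\subseteq\ker\Phi$ then forces $\ker\Phi=K'$ by cardinality. The main obstacle will be the third step: making the identification between the cohomological invariants $\boldsymbol{\mathrm{e}}_{3,j},\boldsymbol{\mathrm{e}}_{3}[\bar{r}]$ and the classes $q_{2j-1},\sum_{i}2\bar{r}_{i}q_{i}$ in $Q(H)/\Dec(H)$ fully precise, which requires tracking the explicit trace-form formulas through the Rost--Merkurjev correspondence.
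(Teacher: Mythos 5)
Your kernel argument and your final counting step are sound and essentially identical to the paper's: the containment $K'\subseteq\Ker$ is proved there exactly as you propose (specializing the Brauer-class relations of Lemma \ref{torsorH} to $r=e_{3j}$ and to $r=e_{3p}+e_{3q}$, then using that $W(K)$ has exponent $2$ because $-1$ is a square), and since the generators of $K'$ all lie in $\bigoplus_{j}(\Z/2\Z)\bar{e}_{2j}\subseteq\bar{Z}_{1}$, your observation that $K'$ coincides with the subgroup computing $l_{2}$ in Proposition \ref{indecomposableH} is correct, so surjectivity together with $K'\subseteq\Ker$ does force $\Ker=K'$ by comparing orders with $|\Inv^{3}_{\red}(H)|=2^{m+l_{1}-l_{2}}$.

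The gap is the one you flag yourself: surjectivity. You reduce it to identifying $\boldsymbol{\mathrm{e}}_{3,j}$ with the class of $q_{2j-1}$ and $\boldsymbol{\mathrm{e}}_{3}[\bar{r}]$ with $\sum_{i}2\bar{r}_{i}q_{i}$ in $Q(H)/\Dec(H)$, but you do not carry this identification out, and it is precisely the nontrivial content of the step --- without it nothing rules out, say, that some $\boldsymbol{\mathrm{e}}_{3}[\bar{r}]$ with $\bar{r}\notin K'$ is trivial or coincides with a decomposable class. The paper does not compute these classes directly; instead it bounds the image from below by exhibiting explicit torsors on which the invariants are nonzero. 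Concretely, for $\bar{r}\in\bar{R}_{1}\setminus K'$ it sets $r=(\bar{r}_{1},\bar{r}_{1},\bar{r}_{2},\dots)\in R$, takes quaternion algebras $Q_{1},\dots,Q_{3n}$ over $L=K((z))$ realizing a ramified value of the type-$A_{1}$ invariant $\boldsymbol{\mathrm{e}}_{3}[r]$ (Lemma \ref{LemtypeA}), and assembles them into $(A_{j},\sigma_{j})=\big(M_{3}(L)\otimes Q_{3j-2}\otimes Q_{3j-1},\,t\otimes\gamma_{3j-2}\otimes\gamma_{3j-1}\big)$; the decomposition $T^{+}_{\sigma_{j}}=T_{\gamma_{3j-2}}\perp T_{\gamma_{3j-1}}$ then gives $\boldsymbol{\mathrm{e}}_{3}[\bar{r}](\eta)=\boldsymbol{\mathrm{e}}_{3}[r](\rho)\neq 0$. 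This construction does double duty: it shows these invariants are not merely nontrivial but ramified, which is exactly what Corollary \ref{keycorollary} needs and which your $Q(H)/\Dec(H)$ route would not deliver. If you pursue your route, you must actually track the trace forms through the correspondence of \cite{Mer164} and verify that the images of your generators exhaust the generating set of the numerator in the formula for $\Inv^{3}_{\red}(H)$ (in particular that the $\boldsymbol{\mathrm{e}}_{3,j}$ contribute $l_{1}$ independent classes beyond the image of $\bar{R}_{1}$); until that is done the proof is not complete.
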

\begin{proof}
Let $R'=\langle \bar{e}_{2j},\, \bar{e}_{2p}+\bar{e}_{2q}\in \bar{R}_{1}\,\,|\,\,  \bar{e}_{2p}, \bar{e}_{2q}\not\in \bar{R}_{1} \rangle$ be the subgroup of $\bar{R}_{1}$. We first show that the for any $\bar{r}\in \bar{R}_{1}\backslash R'$ the invariant $\boldsymbol{\mathrm{e}}_{3}[\bar{r}]$ is nontrivial. Let $\bar{r}=(\bar{r}_{1},\ldots, \bar{r}_{2n})\in \bar{R}_{1}\backslash R'$. Then, we have
\[r:=(\bar{r}_{1},\bar{r}_{1}, \bar{r}_{2}, \ldots, \bar{r}_{2n-1},\bar{r}_{2n-1}, \bar{r}_{2n})\in R.\]

Let $G=(\gGL_{2})^{3n}/\gmu$. Then, by Lemma \ref{LemtypeA} there exists $\rho=(Q_{1},\ldots, Q_{3n})\in H^{1}(L, G_{\red})$ for some quaternion algebras $Q_{i}$ over a power series field $L=K((z))$ such that the image of the value of the invariant $\boldsymbol{\mathrm{e}}_{3}[r]$ for $G_{\red}$ under the residue morphism $\partial_{z}$ is nontrivial, i.e., $\partial_{z}\big(\boldsymbol{\mathrm{e}}_{3}[r](\rho)\big)\neq 0$. 

We shall find $\eta\in H^{1}(L, H_{\red})$ such that $\boldsymbol{\mathrm{e}}_{3}[\bar{r}](\eta)= \boldsymbol{\mathrm{e}}_{3}[r](\rho)$ in $H^{3}(L)$. Let
\begin{equation*}
(A_{j},\sigma_{j})=\big(M_{3}(L)\tens Q_{3j-2}\tens Q_{3j-1}, t\tens \gamma_{3j-2}\tens \gamma_{3j-1}\big),
\end{equation*}
where $t$ denotes the transpose involution on $M_{3}(L)$ and $\gamma_{3j-2}$ and $\gamma_{3j-1}$ denote the canonical involutions on $Q_{3j-2}$ and $Q_{3j-1}$. Then, the Clifford algebra of $(A_{j},\sigma_{j})$ is given by  
\begin{equation}\label{cliffordhred}
	C(A_{j},\sigma_{j})=M_{16}(Q_{3j-2})\times M_{16}(Q_{3j-1}).
\end{equation} 
Since $\rho\in H^{1}(L, G_{\red})$
it follows by Lemma \ref{torsorH} and (\ref{cliffordhred}) that $\eta:=\big((A_{j},\sigma_{j}, f_{j}), Q_{3j}\big)\in H^{1}(L, H_{\red})$ with an appropriate choice of the isomorphisms $f_{j}$. By \cite[Example 11.3]{KMRT}, we have $T^{+}_{\sigma_{j}}=T_{\gamma_{3j-2}}\perp 
T_{\gamma_{3j-1}}$ in $W(L)$, thus 
\begin{equation*}
\phi[\bar{r},\eta]=\perp_{j=1}^{n}(\bar{r}_{2j-1}T_{\gamma_{3j-2}}\perp \bar{r}_{2j-1}T_{\gamma_{3j-1}}\perp \bar{r}_{2j}T_{\gamma_{3j}}).
\end{equation*}
Hence, $\boldsymbol{\mathrm{e}}_{3}[\bar{r}](\eta)= \boldsymbol{\mathrm{e}}_{3}[r](\rho)$, i.e., the invariant $\boldsymbol{\mathrm{e}}_{3}[\bar{r}]$ is nontrivial. Since $F$ is algebraically closed, we have $\Inv^{3}(H_{\red})_{\norm}=\Inv^{3}(H_{\red})_{\ind}$. Therefore, by Proposition \ref{indecomposableH} the homomorphism $\epsilon:\bigoplus_{j\in J_{1}}(\Z/2\Z)\bar{e}_{2j-1}\bigoplus\bar{R}_{1}\to \Inv^{3}(H_{\red})_{\norm}$ given by $\bar{e}_{2j-1}\mapsto \boldsymbol{\mathrm{e}}_{3, j}$  and $\bar{r}\mapsto \boldsymbol{\mathrm{e}}_{3}[\bar{r}]$ is surjective.

Let $\bar{e}_{2j}\in \bar{R}_{1}$ and let $K/F$ be a field extension. Then, we have $e_{3j}\in R$.
Hence, by Lemma \ref{torsorH} every $\eta:=\big((A_{1},\sigma_{1}, f_{1}), Q_{1},\ldots, (A_{n},\sigma_{n}, f_{n}), Q_{n} \big)\in H^{1}(K, H_{\red})$ satisfies the relation $Q_{j}=0$ in $\Br(K)$. Therefore, the quadratic form $\phi[\bar{r},\eta]=T_{\gamma_{j}}$ becomes trivial in $W(K)$ (see Section \ref{traceformsub}). Hence, $\boldsymbol{\mathrm{e}}_{3}[\bar{e}_{2j}]\in \Ker(\epsilon)$.

Let $\bar{e}_{2p}+\bar{e}_{2q}\in \bar{R}_{1}$ with $\bar{e}_{2p},\bar{e}_{2q}\not\in \bar{R}_{1}$. Then, again by Lemma \ref{torsorH} every $\eta:=\big((A_{j},\sigma_{j}, f_{j}), Q_{j}\big)\in H^{1}(L, H_{\red})$ satisfies the relation $Q_{p}+Q_{q}=0$ in $\Br(K)$, i.e., $Q_{p}=Q_{q}$. Therefore, $\phi[\bar{r},\eta]=T_{\gamma_{p}}\perp T_{\gamma_{p}}=0$ in $W(K)$, thus $\boldsymbol{\mathrm{e}}_{3}[\bar{e}_{2p}+\bar{e}_{2q}]\in \Ker(\epsilon)$. Hence, by Proposition \ref{indecomposableH} we have  $\Ker(\epsilon)=R'$.
\end{proof}

\begin{corollary}\label{keycorollary}
Let $H=(\gSpin_{12}\times \gSL_{2})^{n}/\gmu$ defined over an algebraically closed field $F$ of characteristic not $2$, where $n\geq 1$ and $\gmu$ is a central subgroup. Then, every unramified degree $3$ invariant for $H$ is trivial, i.e., $\Inv^{3}_{\nr}(H)\{2\}=0$. 	
\end{corollary}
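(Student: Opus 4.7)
The approach combines the explicit description from Proposition \ref{invariantsHred} with residue computations and a direct-factor reduction. By the stable birationality of $BH$ and $BH_{\red}$ coming from the torus sequence $1 \to H \to H_{\red} \to T \to 1$ recalled in the introduction, $\Inv^{3}_{\nr}(H)\{2\} = \Inv^{3}_{\nr}(H_{\red})\{2\}$; hence it suffices to show every unramified normalized invariant of $H_{\red}$ lies in the kernel $R'$ of the surjection $\epsilon$ of Proposition \ref{invariantsHred}. Writing such an invariant as $\alpha = \sum_{j \in J_{1}} a_{j}\, \boldsymbol{\mathrm{e}}_{3, j} + \boldsymbol{\mathrm{e}}_{3}[\bar{r}]$, the plan splits into showing $\bar{r} \in R'$ and $a_{j} = 0$ for every $j \in J_{1}$.

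First I would establish $\bar{r} \in R'$ by revisiting the ramified torsor constructed in the proof of Proposition \ref{invariantsHred}. For $\bar{r} \notin R'$ that construction yields, via Lemma \ref{LemtypeA}, an $H_{\red}$-torsor $\eta$ over $L = K((z))$ with $(A_{j}, \sigma_{j}) = \bigl(M_{3}(L) \otimes Q_{3j-2} \otimes Q_{3j-1},\, t \otimes \gamma \otimes \gamma\bigr)$ satisfying $\partial_{z}\bigl(\boldsymbol{\mathrm{e}}_{3}[\bar{r}](\eta)\bigr) \neq 0$. The extra observation needed is that for $j \in J_{1}$ the containments $e_{3j-2}, e_{3j-1} \in R$, applied as single-generator cocycle relations, force $Q_{3j-2}$ and $Q_{3j-1}$ to be split over $L$. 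Then $(A_{j}, \sigma_{j}) \simeq \bigl(M_{3}(L), t\bigr) \otimes \bigl(M_{2}(L), \gamma\bigr) \otimes \bigl(M_{2}(L), \gamma\bigr)$ is adjoint to the hyperbolic 12-dimensional form, so $\boldsymbol{\mathrm{e}}_{3, j}(\eta) = 0$ for every $j \in J_{1}$. Therefore $\partial_{z}(\alpha(\eta)) = \partial_{z}\bigl(\boldsymbol{\mathrm{e}}_{3}[\bar{r}](\eta)\bigr) \neq 0$, contradicting the unramifiedness of $\alpha$.

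With $\bar{r} \in R'$, so that $\alpha = \sum_{j \in J_{1}} a_{j}\, \boldsymbol{\mathrm{e}}_{3, j}$, fix $j_{0} \in J_{1}$. Since $e_{3j_{0}-2}, e_{3j_{0}-1} \in R$, the central subgroup $\gmu$ has trivial projection onto $Z\bigl(\gSpin_{12}^{(j_{0})}\bigr) = (\gmu_{2})^{2}$, so the $j_{0}$-th spin component splits off as a direct factor of $\tilde{H}$, yielding $H_{\red} \simeq \mathbf{\Omega}_{12}^{(j_{0})} \times H'_{\red}$ for a smaller reductive group $H'_{\red}$. The product splitting (\ref{twosplitsemisimple}) and its unramified analogue then identify the $\mathbf{\Omega}_{12}^{(j_{0})}$-component of $\alpha$ with $a_{j_{0}} \boldsymbol{\mathrm{e}}_{3, j_{0}}$, which must remain unramified. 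Merkurjev's theorem \cite{Mer2002} gives $\Inv^{3}_{\nr}(\gSpin_{12})\{2\} = 0$, and the stable birationality of $B\mathbf{\Omega}_{12}$ and $B\gSpin_{12}$ transfers this to $\Inv^{3}_{\nr}(\mathbf{\Omega}_{12})\{2\} = 0$, forcing $a_{j_{0}} = 0$.

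The hard part will be the identification in the first step: confirming that when both $Q_{3j-2}$ and $Q_{3j-1}$ are split the pair $(A_{j}, \sigma_{j})$ is adjoint to a hyperbolic form. This reduces to the standard fact that $(M_{2}, \gamma) \otimes (M_{2}, \gamma)$ with the canonical involutions carries an orthogonal involution on $M_{4}$ of trivial discriminant and trivial Clifford invariant, hence adjoint to $2\mathbb{H}$; tensoring with $(M_{3}, t)$ then produces the hyperbolic $12$-dimensional form, so the Arason invariants $\boldsymbol{\mathrm{e}}_{3, j}(\eta)$ do not interfere with the residue of $\boldsymbol{\mathrm{e}}_{3}[\bar{r}](\eta)$.
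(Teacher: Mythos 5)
Your proposal is correct, and its skeleton matches the paper's: reduce to $H_{\red}=(\mathbf{\Omega}_{12}\times\gGL_{2})^{n}/\gmu$ by stable birationality and show that the invariants classified in Proposition \ref{invariantsHred} are all ramified. The differences are in how the two families of generators are treated. For the $\boldsymbol{\mathrm{e}}_{3}[\bar{r}]$ part you reuse the torsor from the proof of Proposition \ref{invariantsHred} and add the observation that, for $j\in J_{1}$, the relations $e_{3j-2},e_{3j-1}\in R$ force $Q_{3j-2}$ and $Q_{3j-1}$ to split, so $(A_{j},\sigma_{j})$ is adjoint to the hyperbolic $12$-dimensional form and every $\boldsymbol{\mathrm{e}}_{3,j}$ vanishes on that torsor; this makes the ramification of an arbitrary combination $\sum_{j}a_{j}\boldsymbol{\mathrm{e}}_{3,j}+\boldsymbol{\mathrm{e}}_{3}[\bar{r}]$ with $\bar{r}\notin R'$ explicit, a point the paper leaves implicit by only checking generators one at a time. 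For the remaining invariants $\boldsymbol{\mathrm{e}}_{3,j}$ the paper builds a concrete torsor over $L=K((z))$ with $\psi_{j}=\langle\langle x,y,z\rangle\rangle\perp h$ and computes $\partial_{z}\big(\boldsymbol{\mathrm{e}}_{3,j}(\eta)\big)=(x,y)\neq 0$, whereas you split off $\mathbf{\Omega}_{12}^{(j_{0})}$ as a direct factor (legitimate: $j_{0}\in J_{1}$ is exactly the condition that $\gmu$ projects trivially to the $(\gmu_{2})^{2}$ sitting in that factor) and quote Merkurjev's triviality of $H^{3}_{\nr}$ for $B\gSpin_{12}$. Both routes work; the paper's residue computation is self-contained, while yours imports a classical-group result in the same spirit as the paper's treatment of $\gE_{6}$, $\gGtwo$, etc. One small remark: you do not really need the full product decomposition of invariants underlying (\ref{twosplitsemisimple}) for reductive groups here --- it suffices that the restriction of an unramified invariant along the inclusion $\mathbf{\Omega}_{12}^{(j_{0})}\hookrightarrow H_{\red}$ is unramified and, by the very definition of $\boldsymbol{\mathrm{e}}_{3,j}$, equals $a_{j_{0}}$ times the (nontrivial, by Proposition \ref{invariantsHred}) Arason invariant of $\mathbf{\Omega}_{12}$.
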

\begin{proof}
Let $H_{\red}=(\mathbf{\Omega}_{12}\times \gGL_{2})^{n}/\gmu$. As $BH$ is stably birational to $BH_{\red}$, it suffices to prove that every nontrivial invariant of $H_{\red}$ is ramified. For any $\bar{r}\in \bar{R}_{1}\backslash R'$, we have shown that the invariant $\boldsymbol{\mathrm{e}}_{3}[r]$ is ramified in the proof of Proposition \ref{invariantsHred}. Hence, it suffices to show that the invariant $\boldsymbol{\mathrm{e}}_{3,j}$ defined in (\ref{projectioninvariant}) is ramified. Let $j\in J_{1}$ and let $Q=(x, y)$ be a division quaternion algebra over a field extension $K/F$. Let $\psi_{j}=\langle\langle x, y, z\rangle\rangle\perp h$ be a quadratic form over $L=K((z))$, where $h$ denotes a hyperbolic form. Choose $\eta=\big((A_{1},\sigma_{1},f_{1}), Q_{1}, \ldots, (A_{n},\sigma_{n},f_{n}), Q_{n} \big)\in H^{1}(L,H_{\red})$ such that $Q_{1}=\cdots=Q_{n}=M_{2}(L)$,
\begin{equation*}
(A_{j}, \sigma_{j})=\big(M_{12}(L), \sigma_{\psi_{j}}\big), \text{ and } (A_{i}, \sigma_{i})=\big(M_{12}(L), t\big)
\end{equation*}
for all $1\leq i\neq j\leq n$, where $\sigma_{\psi_{j}}$ denotes the adjoint involution on $M_{12}(L)$ with respect to $\psi_{j}$ and $t$ denotes the transpose involution on $M_{12}(E)$. Then, $\partial_{z}\big(\boldsymbol{\mathrm{e}}_{3,j}(\eta)\big)=(x,y)\neq 0$, thus the invariant ramifies.\end{proof}

\section{Unramified invariants for exceptional groups}\label{finalsection}
	
In this section we prove our main results.

\begin{lemma}\label{mainpropE6}
Let $G=(\gE_{6})^{n}/\gmu$ defined over an algebraically closed field $F$, where $n\geq 1$ and $\gmu\subset \gmu_{3}^{n}$ is an arbitrary central subgroup. Then, for any $p\neq \operatorname{char}(F)$ we have trivial unramified degree $3$ invariants of $G$, i.e., $\Inv^{3}_{\nr}(G)\{p\}=0$.
\end{lemma}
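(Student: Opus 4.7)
The plan is to reduce the lemma in three controlled steps to Garibaldi's triviality theorem for $\gE_{6}$ quoted in the introduction as \cite{Skip2}. First, by (\ref{oddprimeunramified}) the group $\Inv^{3}_{\nr}(G)\{p\}$ vanishes for every odd prime $p\neq\operatorname{char}(F)$, so it suffices to treat the prime $p=2$.

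Next, because $\gmu\subseteq \gmu_{3}^{n}$, the order of $\gmu$ is a power of $3$ and in particular coprime to $2$. Applying the second assertion of Proposition \ref{propunramified} with $p=2$ to the central isogeny $(\gE_{6})^{n}\to G=(\gE_{6})^{n}/\gmu$, the natural map gives an isomorphism
\[
\Inv^{3}_{\nr}(G)\{2\}\;\simeq\;\Inv^{3}_{\nr}\bigl((\gE_{6})^{n}\bigr)\{2\}.
\]
This is precisely the ``restriction-corestriction'' reduction advertised in the introduction and converts the problem to the simply connected case.

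It then remains to show $\Inv^{3}_{\nr}\bigl((\gE_{6})^{n}\bigr)\{2\}=0$. Iterating the product decomposition (\ref{twosplitsemisimple}) for split semisimple groups yields
\[
\Inv^{3}\bigl((\gE_{6})^{n}\bigr)\;\simeq\;\bigoplus_{i=1}^{n}\Inv^{3}(\gE_{6}).
\]
The key observation is that this splitting respects unramifiedness: an invariant $\alpha$ of $G_{1}\times G_{2}$ decomposes as $\alpha(P_{1},P_{2})=\alpha_{1}(P_{1})+\alpha_{2}(P_{2})$, so specialising one factor to the trivial torsor shows that the residue $\partial_{v}\alpha_{i}(P_{i})$ vanishes whenever $\partial_{v}\alpha=0$; conversely the sum of unramified invariants is unramified. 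Hence inductively
\[
\Inv^{3}_{\nr}\bigl((\gE_{6})^{n}\bigr)\;\simeq\;\bigoplus_{i=1}^{n}\Inv^{3}_{\nr}(\gE_{6}).
\]
By Garibaldi's theorem \cite{Skip2} (or equivalently via the identification (\ref{rostident}) with $H^{3}_{\nr}(F(B\gE_{6}))$, which is trivial for the simple simply connected group of type $E_{6}$ over an algebraically closed field of characteristic $0$), each summand vanishes, and in particular the $2$-primary part is zero. Chaining the isomorphisms above delivers $\Inv^{3}_{\nr}(G)\{2\}=0$, which together with Step~1 gives the lemma.

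There is no serious obstacle here: the only step requiring minor verification is that the splitting of (\ref{twosplitsemisimple}) preserves the unramified subgroup, which is an immediate consequence of evaluating residues on torsors with one trivial factor. All nontrivial input, namely the isogeny invariance of Proposition \ref{propunramified} and the triviality for a single $\gE_{6}$, is already available in the literature cited in the paper.
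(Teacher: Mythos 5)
Your proposal is correct and follows essentially the same route as the paper's proof: reduce to $p=2$ via (\ref{oddprimeunramified}), pass to the simply connected cover by Proposition \ref{propunramified} since $|\gmu|$ is coprime to $2$, split off the $n$ factors via (\ref{twosplitsemisimple}), and invoke $\Inv^{3}_{\nr}(\gE_{6})\{2\}=0$. Your extra remark that the direct-sum decomposition respects the unramified subgroup is a point the paper uses implicitly, and your justification of it is sound.
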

\begin{proof}
Let $G=(\gE_{6})^{n}/\gmu$. By (\ref{oddprimeunramified}), it suffices to show that $\Inv^{3}_{\nr}(G)\{2\}=0$. As $\Inv^{3}_{\nr}(\gE_{6})\{2\}=0$ and $\gmu\subseteq \gmu_{3}^{n}$, it follows by Proposition \ref{propunramified} and (\ref{twosplitsemisimple}) that
\[\Inv^{3}_{\nr}(G)\{2\}=\Inv^{3}_{\nr}\big((\gE_{6})^{n}\big)\{2\}=\big(\Inv^{3}_{\nr}(\gE_{6})\{2\}\big)^{n}=0.\qedhere\]\end{proof}

The simply connected group $\gE_{7}$ contains a subgroup $P=(\gSpin_{12}\times \gSL_{2})/\boldsymbol{\lambda}$, where $\boldsymbol{\lambda}=\langle (1, -1, -1)\rangle\subset (\gmu_{2})^{3}$. Let $\gmu\subset (\gmu_{2})^{n}$ be a central subgroup of $(\gE_{7})^{n}$. As $P$ is a subgroup of $\gE_{7}$ of maximal rank, we have $\gmu\subset (P)^{n}$.

\begin{lemma}\label{Esevenlemma}
Let $G=(\gE_{7})^{n}/\gmu$ defined over a field $F$ of characteristic not $2$, where $\gmu\subset (\gmu_{2})^{n}$ is a central subgroup. Let $H=(P)^{n}/\gmu$. Then, $\Inv^{3}(G)\{2\}\subset \Inv^{3}(H)\{2\}$.
\end{lemma}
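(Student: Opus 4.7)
The plan is to show that the restriction homomorphism $\res\colon \Inv^{3}(G) \to \Inv^{3}(H)$ induced by the inclusion $H \hookrightarrow G$ is injective on the $2$-primary component. Since $F$ is algebraically closed, by \cite{Mer164} and \cite[Theorem]{LM} we identify $\Inv^{3}(G) = \Inv^{3}_{\norm}(G) = \Inv^{3}_{\ind}(G) \simeq Q(G)/\Dec(G)$, and similarly for $H$. Because $P \subset \gE_{7}$ is a subgroup of maximal rank, both $G$ and $H$ share the same split maximal torus $T$ (hence the same character group $T^{*}$), and $\res$ is induced by viewing a $W_{G}$-invariant quadratic form on $T^{*}$ as a $W_{H}$-invariant one (since $W_{H} \subseteq W_{G}$).

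First, I would compute the restriction of each generator $q_{i}^{\gE_{7}}$ of $Q((\gE_{7})^{n})$ in terms of the generators $q_{2i-1}^{\gSpin_{12}}$ and $q_{2i}^{\gSL_{2}}$ of $Q((\gSpin_{12}\times \gSL_{2})^{n})$. The Borel--de Siebenthal embedding $P \hookrightarrow \gE_{7}$ has Dynkin index $1$ on each simple factor (since $\gE_{7}$ is simply laced and $P$ is of maximal rank, so the short/long root lengths coincide); after normalizing so that all three forms are the standard generators, this yields $q_{i}^{\gE_{7}} = q_{2i-1}^{\gSpin_{12}} + q_{2i}^{\gSL_{2}}$ in $Q((\tilde H))$. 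This determines the map $Q(G) \to Q(H)$ explicitly.

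Second, I would compare the decomposable subgroups. Since every $W_{G}$-orbit in $T^{*}$ is a disjoint union of $W_{H}$-orbits, the orbit sums defining $\Dec(G)$ decompose into sums of $W_{H}$-orbit sums; after applying $c_{2}$ this gives the inclusion $\Dec(G) \subseteq \Dec(H)$ inside $Q(H)$. Combining this with the first step and Propositions \ref{prop:typeE7} and \ref{indecomposableH}, the induced map of quotient groups $Q(G)/\Dec(G) \to Q(H)/\Dec(H)$ becomes explicit. The final step is to verify, by lifting each basis element of the $2$-primary summand of $\Inv^{3}(G)\{2\}$ from Prop \ref{prop:typeE7} and tracking its image, that this map is injective on the $2$-primary components.

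The main obstacle is this last verification: $W_{H}$ is strictly smaller than $W_{G}$, so $\Dec(H)$ acquires many new generators from $W_{H}$-orbit sums that have no counterpart in $\Dec(G)$. The delicate point is to check that no nontrivial $2$-primary class in $Q(G)/\Dec(G)$ is absorbed into these new decomposables; this will require using the explicit descriptions of the linear polynomials $f_{p}$ defining $\gmu$ appearing both in the proof of Prop \ref{prop:typeE7} (via (\ref{TcharacterE7}) and (\ref{indecomreducE7})) and in the proof of Prop \ref{indecomposableH} (via (\ref{TcharacterH}) and (\ref{indecomreducHH})), together with the fact that a $2$-primary class in $Q(G)/\Dec(G)$ lies in the image of $\gmu$-free coefficient combinations whose lifts to $Q(H)$ remain outside the $4\Z q_{2j-1}\oplus 4\Z q_{2j}$-part of $\Dec(H)$.
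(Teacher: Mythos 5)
Your proposal takes a completely different route from the paper, and as written it has a genuine gap: the step you yourself flag as ``the main obstacle'' --- checking that no nontrivial $2$-primary class of $Q(G)/\Dec(G)$ is absorbed by the new decomposable elements in $\Dec(H)$ --- is precisely the content of the lemma in your formulation, and you do not carry it out. A plan that defers the decisive verification is not a proof. There is also an unjustified step earlier: you assert that, under the identification $\Inv^{3}_{\ind}\simeq Q/\Dec$ of \cite{Mer164} and \cite{LM}, the restriction map $\Inv^{3}(G)\to\Inv^{3}(H)$ for the maximal-rank subgroup $H\hookrightarrow G$ is computed by the natural map $Q(G)/\Dec(G)\to Q(H)/\Dec(H)$ on $W$-invariant quadratic forms. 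For simply connected groups this is the Dynkin-index story (and your index-$1$ computation for $D_{6}\times A_{1}\subset E_{7}$ is correct, as is the inclusion $\Dec(G)\subseteq\Dec(H)$ via splitting $W_{G}$-orbits into $W_{H}$-orbits), but for the quotients by $\gmu$ this functoriality is not available off the shelf in the references you cite and would itself require proof. Finally, the lemma is stated over an arbitrary field of characteristic not $2$, whereas your very first identification $\Inv^{3}_{\norm}=\Inv^{3}_{\ind}$ already forces $F$ to be algebraically closed.

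The paper sidesteps all of this by arguing on torsors rather than on the groups of invariants. It invokes \cite[Proposition 1]{Petrov} to get that $H^{1}\big(-,(P)^{n}/(\gmu_{2})^{n}\big)\to H^{1}\big(-,(\bar{\gE}_{7})^{n}\big)$ is $2$-surjective (every class lifts after an odd-degree separable extension), runs a diagram chase through the exact sequences associated with the central quotients by $(\gmu_{2})^{n}/\gmu$ to conclude that $H^{1}(-,H)\to H^{1}(-,G)$ is $2$-surjective, and then applies \cite[Lemma 5.3]{Skip}, which converts $2$-surjectivity on torsors directly into injectivity of the restriction $\Inv^{3}(G)\{2\}\to\Inv^{3}(H)\{2\}$. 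This requires no computation of $Q/\Dec$ on either side and works over any field of characteristic not $2$. If you wish to salvage your computational route, you would need (a) a proof of the functoriality of the $Q/\Dec$ description for the embedding $H\hookrightarrow G$, and (b) the explicit injectivity check for every admissible $\gmu$ by comparing the presentations (\ref{indecomreducE7}) and (\ref{indecomreducHH}); the paper's argument shows that neither is necessary.
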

\begin{proof}
Let $\bar{\gE}_{7}$ be a split simple adjoint group of type $E_{7}$. Consider the commutative diagram of exact sequences
\begin{equation*}
	\xymatrix{
		H^{1}\big(F, (\gmu_{2})^{n}/\gmu\big) \ar@{->}[r]\ar@{->}[d] & H^{1}\big(F, H\big) \ar@{->}[r]\ar@{->}[d]^{\phi} & H^{1}\big(F, (P)^{n}/(\gmu_{2})^{n}\big) \ar@{->}[r]\ar@{->}[d]^{\psi} & 
		H^{2}\big(F, (\gmu_{2})^{n}/\gmu\big)  \ar@{=} [d]\\
		H^{1}\big(F, (\gmu_{2})^{n}/\gmu\big) \ar@{->}[r] & H^{1}\big(F, G\big) \ar@{->}[r]  & H^{1}\big(F, (\bar{\gE}_{7})^{n}\big)\ar@{->}[r] & H^{2}\big(F, (\gmu_{2})^{n}/\gmu\big). \\
	}
\end{equation*}
By \cite[Proposition 1]{Petrov}, the map $\psi$ is $2$-surjective (i.e., for every field $K/F$ and every  $\eta\in H^{1}\big(K, (\bar{\gE}_{7})^{n}\big)$ there exists a separable extension $E/K$ of odd degree and $\eta'\in H^{1}\big(E, (P)^{n}/(\gmu_{2})^{n}\big)$ such that $\psi(\eta')=\eta_{E}$). By diagram chasing, we see that the map $\phi$ is $2$-surjective. Hence, by \cite[Lemma 5.3]{Skip}, the restriction map $\Inv^{3}(G)\{2\}\to \Inv^{3}(H)\{2\}$ induced by $H\hookrightarrow G$ is injective.\end{proof}

\begin{proposition}\label{mainpropE7}
	Let $G=(\gE_{7})^{n}/\gmu$ defined over an algebraically closed field $F$, where $n\geq 1$ and $\gmu\subset \gmu_{2}^{n}$ is an arbitrary central subgroup. Then, for any $p\neq \operatorname{char}(F)$ we have trivial unramified degree $3$ invariants of $G$, i.e., $\Inv^{3}_{\nr}(G)\{p\}=0$.
\end{proposition}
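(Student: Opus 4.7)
The plan is to reduce the statement to Corollary \ref{keycorollary} via Lemma \ref{Esevenlemma}. By (\ref{oddprimeunramified}), $\Inv^{3}_{\nr}(G)\{p\}=0$ for every odd prime $p\neq \operatorname{char}(F)$, so it will suffice to treat $p=2$.

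For the $2$-primary part, I would exploit the maximal rank subgroup $P=(\gSpin_{12}\times\gSL_{2})/\boldsymbol{\lambda}$ of $\gE_{7}$ introduced just before Lemma \ref{Esevenlemma}. Since $Z(\gE_{7})=\gmu_{2}$ is contained in $Z(P)$ (as $P$ contains a maximal torus of $\gE_{7}$), the central subgroup $\gmu\subset(\gmu_{2})^{n}$ of $(\gE_{7})^{n}$ is also central in $P^{n}$, so one may form $H:=P^{n}/\gmu$ as a subgroup of $G$. Lemma \ref{Esevenlemma} then provides an injection $\Inv^{3}(G)\{2\}\hookrightarrow\Inv^{3}(H)\{2\}$ induced by $H\hookrightarrow G$. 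The next step is the straightforward observation that this restriction map sends unramified invariants to unramified invariants: if $\alpha\in\Inv^{3}_{\nr}(G)\{2\}$ and $\eta\in H^{1}(K,H)$ for some field extension $K/F$, then the value of the restricted invariant at $\eta$ equals $\alpha$ evaluated at the image of $\eta$ in $H^{1}(K,G)$, which lies in $H^{3}_{\nr}(K)\{2\}$ by hypothesis. Consequently the injection of Lemma \ref{Esevenlemma} restricts to an injection $\Inv^{3}_{\nr}(G)\{2\}\hookrightarrow\Inv^{3}_{\nr}(H)\{2\}$.

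To finish, I would rewrite $H$ in the form covered by Corollary \ref{keycorollary}. Writing $P^{n}=(\gSpin_{12}\times\gSL_{2})^{n}/\boldsymbol{\lambda}^{n}$ and pulling $\gmu$ back along the quotient $(\gSpin_{12}\times\gSL_{2})^{n}\twoheadrightarrow P^{n}$ to a central subgroup $\gmu'\subset(\gmu_{2})^{3n}$, we obtain $H=(\gSpin_{12}\times\gSL_{2})^{n}/\gmu'$, which is exactly the type of group treated by Corollary \ref{keycorollary}. That corollary gives $\Inv^{3}_{\nr}(H)\{2\}=0$, whence $\Inv^{3}_{\nr}(G)\{2\}=0$, and combined with the odd-prime reduction the proposition follows.

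The main obstacle is not this final assembly, which amounts to a clean diagram chase, but rather the preparatory ingredients already in place: Lemma \ref{Esevenlemma}, whose proof relies on Petrov's $2$-surjectivity result for the maximal rank inclusion $P\subset\gE_{7}$ together with a Serre-style restriction argument, and Corollary \ref{keycorollary}, which in turn depends on the explicit description of $H_{\red}$-torsors (Lemma \ref{torsorH}) and the Arason-invariant analysis of trace forms carried out in Section \ref{unramifiedinvariantsH}.
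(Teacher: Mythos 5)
Your proposal is correct and follows essentially the same route as the paper: reduce to $p=2$ via (\ref{oddprimeunramified}), inject $\Inv^{3}(G)\{2\}$ into $\Inv^{3}(H)\{2\}$ for $H=P^{n}/\gmu$ using Lemma \ref{Esevenlemma}, and conclude from $\Inv^{3}_{\nr}(H)\{2\}=0$ (Corollary \ref{keycorollary}). You merely spell out two points the paper leaves implicit --- that restriction preserves unramifiedness and that $H$ can be rewritten as $(\gSpin_{12}\times\gSL_{2})^{n}/\gmu'$ to match the hypotheses of the corollary --- both of which are handled correctly.
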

\begin{proof}

Let $G=(\gE_{7})^{n}/\gmu$ and $H=(P)^{n}/\gmu$. By (\ref{oddprimeunramified}), it is enough to show that $\Inv^{3}_{\nr}(G)\{2\}=0$. By Corollary \ref{keycorollary} we have $\Inv^{3}_{\nr}(H)\{2\}=0$, thus by Lemma \ref{Esevenlemma}, the result follows.\end{proof}

\begin{theorem}\label{realmainthm}
Let $G$ be a reductive group over an algebraically closed field. Assume that the Dynkin diagram of $G$ is the disjoint union of diagrams of types $G_{2}$, $F_{4}$, $E_{6}$, $E_{7}$, $E_{8}$. Then, for any $p\neq \operatorname{char}(F)$ we have $\Inv^{3}_{\nr}(G)\{p\}=0$.
\end{theorem}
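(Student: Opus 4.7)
The plan is to reduce Theorem \ref{realmainthm} to the three already-established building blocks: Garibaldi's triviality result for the simple simply connected exceptional groups, Lemma \ref{mainpropE6}, and Proposition \ref{mainpropE7}. The reduction has two stages: first from reductive to semisimple, and then a decomposition by Dynkin type that untangles the central subgroup of the simply connected cover.

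First I would pass from the reductive group $G$ to its derived subgroup $G_{ss}$. Since $F$ is algebraically closed, $G$ is split, and the quotient $G/G_{ss}$ is a split torus $T$, giving an exact sequence $1\to G_{ss}\to G\to T\to 1$. As recalled just before (\ref{inclusionsindecomp}), this forces $BG$ and $BG_{ss}$ to be stably birational, so $\Inv^{3}_{\nr}(G)\{p\}=\Inv^{3}_{\nr}(G_{ss})\{p\}$ for every prime $p\neq \operatorname{char}(F)$, and it suffices to treat the semisimple case. The simply connected cover of $G_{ss}$ decomposes as
\[\tilde{G}_{ss}\simeq (\gGtwo)^{n_{1}}\times (\gFfour)^{n_{2}}\times (\gE_{6})^{n_{3}}\times (\gE_{7})^{n_{4}}\times (\gE_{8})^{n_{5}}.\]
Since the simply connected groups of types $G_{2}$, $F_{4}$, $E_{8}$ are centerless, the center of $\tilde{G}_{ss}$ is $(\gmu_{3})^{n_{3}}\times (\gmu_{2})^{n_{4}}$; by coprimality of orders, any central subgroup yielding $G_{ss}$ splits as a direct product of some $\gmu_{3}'\subseteq (\gmu_{3})^{n_{3}}$ and $\gmu_{2}'\subseteq (\gmu_{2})^{n_{4}}$. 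Hence
\[G_{ss}\simeq (\gGtwo)^{n_{1}}\times (\gFfour)^{n_{2}}\times \bigl((\gE_{6})^{n_{3}}/\gmu_{3}'\bigr)\times \bigl((\gE_{7})^{n_{4}}/\gmu_{2}'\bigr)\times (\gE_{8})^{n_{5}}.\]

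Iterated application of (\ref{twosplitsemisimple}) then decomposes $\Inv^{3}(G_{ss})$ as the direct sum of the contributions from the five factors, and the same holds after passing to the unramified subgroups via (\ref{rostident}). The $G_{2}$, $F_{4}$, and $E_{8}$ factors contribute $0$ by Garibaldi's theorem \cite{Skip2} for simple simply connected exceptional groups. The $E_{6}$ and $E_{7}$ factors contribute $0$ by Lemma \ref{mainpropE6} and Proposition \ref{mainpropE7} respectively. Assembling these gives $\Inv^{3}_{\nr}(G_{ss})\{p\}=0$, and hence $\Inv^{3}_{\nr}(G)\{p\}=0$.

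All the substantive content of Theorem \ref{realmainthm} has already been absorbed into Lemma \ref{mainpropE6} and Proposition \ref{mainpropE7}; the $E_{7}$ case via the subgroup of maximal rank $D_{6}\times A_{1}$ is the genuine difficulty. What remains in the present argument is purely bookkeeping: the stable birational reduction $BG\sim BG_{ss}$, the coprime splitting of the central subgroup, and the product formula (\ref{twosplitsemisimple}). I would therefore expect no new obstacle at this final step.
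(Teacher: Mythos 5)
Your proposal is correct and follows essentially the same route as the paper: reduce to the semisimple case via stable birationality of $BG$ and $BG_{ss}$, split off the centerless factors $\gGtwo$, $\gFfour$, $\gE_{8}$ using (\ref{twosplitsemisimple}), separate the $E_{6}$ and $E_{7}$ parts by coprimality of the centers $\gmu_{3}$ and $\gmu_{2}$, and invoke Lemma \ref{mainpropE6} and Proposition \ref{mainpropE7}. The only cosmetic difference is that the paper first reduces to $p=2$ via (\ref{oddprimeunramified}) before treating the exceptional factors, whereas you handle all primes at once, which is equally valid since the cited results cover every $p\neq\operatorname{char}(F)$.
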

\begin{proof}
By (\ref{oddprimeunramified}), it suffices to show that $\Inv^{3}_{\nr}(G)\{2\}=0$. Let $\mathbf{G}_{2}, \mathbf{F}_{4}, \mathbf{E}_{8}$ denote the split simple simply connected groups  of types $G_{2}$, $F_{4}$, $E_{8}$, respectively. Since $\Inv^{3}_{\nr}(\mathbf{G}_{2})=\Inv^{3}_{\nr}(\mathbf{F}_{4})=\Inv^{3}_{\nr}(\mathbf{E}_{8})=0$ and  the groups $\mathbf{G}_{2}, \mathbf{F}_{4}, \mathbf{E}_{8}$ have the trivial center, by (\ref{twosplitsemisimple}) we may assume that the Dynkin diagram of $G$ is the disjoint union of diagrams of types $E_{6}$ and $E_{7}$. 

As the orders of the centers $\gmu_{3}$ and $\gmu_{2}$ of $\gE_{6}$ and $\gE_{7}$ are relatively prime, we may assume that $G=\big((\gE_{6})^{n_{1}}/\gmu_{1}\big)\times \big((\gE_{7})^{n_{2}}/\gmu_{2}\big)$ for some integers $n_{1}, n_{2}\geq 1$ and some central subgroups $\gmu_{1}\subset \gmu_{3}^{n_{1}}$ and $\gmu_{2}\subset \gmu_{2}^{n_{2}}$. Hence, the statement follows by Lemma \ref{mainpropE6} and Proposition \ref{mainpropE7}.\end{proof}


\begin{thebibliography}{10}

\bibitem{Baek}
S.~Baek, \emph{Degree three invariants for semisimple groups of types $B$, $C$, and $D$}, Adv. Math. \textbf{351} (2019), 195--235.


\bibitem{BRZ}
S.~Baek, R.~Devyatov, and K.~Zainoulline, \emph{The $K$-theory of versal flags and cohomological invariants of degree $3$}, Doc. Math. \textbf{22} (2017), 1117--1148.





\bibitem{Bog}
F.~A.~Bogomolov, \emph{The Brauer group of quotient spaces of linear representations}, Izv. Akad.
Nauk SSSR Ser. Mat. \textbf{51} (1987), no.~3, 485--516, 688.





\bibitem{Skip}
S.~Garibaldi, \emph{Cohomological invariants: Exceptional groups and spin groups}, Mem. Amer. Math. Soc. \textbf{200} (2009), no.~937.


\bibitem{Skip2}
S.~Garibaldi, \emph{Cohomological invariants: Exceptional groups and spin groups}, Trans. Amer. Math. Soc. \textbf{358} (2006), no.~1, 359--371.

\bibitem{GMS}
S.~Garibaldi, A.~Merkurjev, J.-P.~Serre, \emph{Cohomological Invariants in Galois Cohomology}, University Lecture Series \textbf{28}, AMS, Providence, RI, (2003).

\bibitem{KMRT} M.-A.~Knus, A.~Merkurjev, M. Rost, and J.-P. Tignol, \emph{The book of involutions}, American
Mathematical Society, Providence, RI, 1998, With a preface in French by J. Tits.

\bibitem{Mer17prime}
A.~S.~Merkurjev, \emph{Invariants of algebraic groups and retract rationality of classifying spaces},
Algebraic groups: structure and actions, 277--294, Proc. Sympos. Pure Math., \textbf{94}, Amer.
Math. Soc., Providence, RI, 2017.


\bibitem{LM}
D.~Laackman and A.~Merkurjev, \emph{Degree three cohomological invariants of reductive
	groups}, Comment. Math. Helv. \textbf{91} (2016), no.~3, 493--518.
	
\bibitem{Mer17}
A.~S.~Merkurjev, \emph{Unramified degree three invariants for reductive groups of type $A$}, J. Algebra \textbf{502} (2018), 49--60. 




\bibitem{Mer162}
A.~Merkurjev, \emph{Unramified degree three invariants of reductive groups}, Adv. Math. \textbf{293} (2016), 697--719.

\bibitem{Mer163}
A. Merkurjev, \emph{Cohomological invariants of central simple algebras}, Izvestia RAN, Ser. Mat. \textbf{80} (2016), no.~5, 869--883.

\bibitem{Mer164}
A.~Merkurjev, \emph{Degree three cohomological invariants of semisimple groups}, J. Eur. Math. Soc. \textbf{18} (2016), 657--680.

\bibitem{Mer2002}
A.~Merkurjev, \emph{Unramified cohomology of classifying varieties for classical simply connected groups}, Ann. Sci. \'{E}cole Norm. Sup. (4) \textbf{35} (2002), no.~3, 445--476.




\bibitem{Petrov}
V.~Petrov, \emph{A rational construction of {L}ie algebras of type {$E_7$}}, J. Algebra, \textbf{481} (2017), 348--361.

\bibitem{Que}
A.~Qu\'eguiner, \emph{Cohomological invariants of algebras with involution}, J. Algebra, \textbf{194} (1997), 299--330.

\bibitem{Sal}
D.~J.~Saltman, \emph{Noether’s problem over an algebraically closed field}, Invent. Math. \textbf{77} (1984) 71--84.

\bibitem{Sal2}
D.~J.~Saltman, \emph{{$H^3$} and generic matrices}, J. Algebra \textbf{195} (1997), no.~2, 387--422. 


\bibitem{Swan}
R.~G.~Swan, \emph{Invariant rational functions and a problem of Steenrod}, Invent. Math. \textbf{7} (1969) 148--158.

\bibitem{Serre}
J.-P.~Serre, \emph{Galois Cohomology}, Springer-Verlag, Berlin, 1997.


\end{thebibliography}
\end{document}